\numberwithin{equation}{section}
\newcommand*{\textlabel}[2]{%
  \edef\@currentlabel{#1}% Set target label
  \phantomsection% Correct hyper reference link
  #1\label{#2}% Print and store label
}
\def\semicolon{;}
\def\applytolist#1{
    \expandafter\def\csname multi#1\endcsname##1{
        \def\multiack{##1}\ifx\multiack\semicolon
            \def\next{\relax}
        \else
            \csname #1\endcsname{##1}
            \def\next{\csname multi#1\endcsname}
        \fi
        \next}
    \csname multi#1\endcsname}
\def\calc#1{\expandafter\def\csname c#1\endcsname{{\mathcal #1}}}
\def\bbc#1{\expandafter\def\csname bb#1\endcsname{{\mathbb #1}}}
\def\bffc#1{\expandafter\def\csname bf#1\endcsname{{\mathbf #1}}}
\def\sfc#1{\expandafter\def\csname s#1\endcsname{{\sf #1}}}
\def\rmfc#1{\expandafter\def\csname rm#1\endcsname{{\mathrm #1}}}
\def\frfc#1{\expandafter\def\csname fr#1\endcsname{{\mathfrak #1}}}
\def\scfc#1{\expandafter\def\csname sc#1\endcsname{{\mathscr #1}}}
\DeclareMathOperator{\cAd}{Ad}
\DeclareMathOperator{\id}{id}
\DeclareMathOperator{\supp}{supp}
\DeclareMathOperator{\Exp}{Exp}
\DeclareMathOperator{\Vir}{\sf{Vir}}
\DeclareMathOperator{\PSL}{PSL}
\def\bb1{\mathbbm{1}}
\def\DiffSone{\mathrm{Diff}_+(S^1)}
\def\uDiffSone{\overline{\mathrm{Diff}_+(S^1)}} %\widetilde
\def\Diff{\mathrm{Diff}}
\def\Vect{\mathrm{Vect}(S^1)}
\def\Vir{{\sf Vir}}
\def\tpi{{\tilde\pi}}
\def\<{\langle}
\def\>{\rangle}
\newtheorem{theorem}{Theorem}[section]
\newtheorem{lemma}[theorem]{Lemma}
\theoremstyle{remark}
\newtheorem{remark}[theorem]{Remark}
\title{Representations of conformal nets associated with infinite-dimensional groups}
\author[1]{Maria Stella Adamo\thanks{{\tt maria.stella.adamo@fau.de}}}
\author[2]{Luca Giorgetti \thanks{{\tt giorgett@mat.uniroma2.it}}}
\author[2]{Yoh Tanimoto\thanks{{\tt hoyt@mat.uniroma2.it}}}
\affil[1]{Department Mathematik, Friedrich-Alexander-Universit\"at Erlangen-N\"urnberg \authorcr
Cauerstrasse 11, 91058 Erlangen, Germany}
\affil[2]{Dipartimento di Matematica, Universit\`a di Roma Tor Vergata,\authorcr
   Via della Ricerca Scientifica 1, I-00133 Roma, Italy}
\date{}
\begin{document}
\maketitle
\begin{center}
 \textit{Dedicated to Karl-Hermann Neeb in occasion of his 60th birthday}
\end{center}

\begin{abstract}
We study the relation between representations of certain infinite-dimensional Lie groups and those of the associated conformal nets. For a chiral conformal net extending the net generated by the vacuum representation of a loop group or diffeomorphism group of the circle, we show that any conformal net representation induces a positive-energy representation of the corresponding group. Consequently, we prove that any representation of such a conformal net is automatically diffeomorphism covariant. Moreover, we show that the covariance cocycles of conformal net representations satisfy naturality with respect to the action of diffeomorphisms, i.e.\ the diffeomorphisms act equivariantly on the category of conformal net representations.
\end{abstract}

\section{Introduction}\label{intro}

Representation theory of infinite-dimensional Lie groups and Lie algebras is deeply intertwined with two-dimensional conformal field theory (CFT) since the seminal work of Belavin--Polyakov--Zamolodchikov \cite{BPZ84}. There, the authors exploited the fact that the chiral components of the stress-energy tensor (underlying conformal symmetry) generate an infinite-dimensional Lie algebra, namely the Virasoro algebra, to strongly constrain and to determine the correlation functions. The study of \emph{unitary} representations of the Virasoro algebra led \cite{GKO86, FQS86} to a striking classification result for two-dimensional CFTs, solely based on the positivity of the Hilbert space inner product. Another type of symmetries, namely the symmetries generated by local currents, are also described by a different family of infinite-dimensional Lie groups, the loop groups \cite{PS86}. Since \cite{KZ84CurrentAlgebra}, the theory of unitary and \emph{positive-energy} representations of such loop groups and of the associated affine Kac--Moody algebras has been applied to provide a suitable mathematical tool to determine the number and the fusion rules of superselection sectors for a notable class of CFT models, i.e., Wess--Zumino--Witten (WZW) models.

These models are specific but important, e.g., because they are the first examples where the charge structure has been fully understood.
Moreover, the Virasoro algebra is contained in a suitable sense in any CFT.
All these examples can be studied in several model-independent frameworks, ranging from formalisms which are tailored for CFTs, e.g., chiral and full vertex operator algebras \cite{Kac98, Moriwaki23}, to formalisms for more general quantum field theories, e.g., local nets of operator algebras \cite{CKLW18}, or quantum fields in the Wightman formalism \cite{AGT23Pointed} or Osterwalder--Schrader axioms \cite{AMT24OS}. In this paper, we choose to use the operator-algebraic framework of local \emph{conformal} nets \cite{GF93, CKLW18}, due to the simple mathematical description of their superselection sectors, i.e., equivalence classes of DHR representations.

A two-dimensional conformal net is a family of von Neumann algebras $\{\tilde\cA(O)\}$ (the local observables of the theory) parametrized by regions $\{O\}$
in the two-dimensional Minkowski space, acting on a common Hilbert space $\cH$ endowed with a vacuum vector $\Omega$
(the ground state of the conformal Hamiltonian),
and satisfying physically motivated axioms, such as locality,
covariance (with respect to the spacetime symmetry group), positivity of energy \cite{Haag96}. Such axioms are adapted to two-dimensional CFT from more general quantum field theory.

A prototypical example comes from a quantum field $\phi(t,x)$ smeared with test functions $f$, namely an operator-valued distribution satisfying similar
axioms, where the local algebras are defined by $\tilde\cA(O) := \{\rme^{\rmi\phi(f)}: \supp f \subset O\}''$. More generally, each local algebra $\tilde\cA(O)$ is considered as the algebra generated by the observables (self-adjoint operators) that can be measured in the spacetime region $O$.
The Einstein causality (locality) imposes that the algebras associated with spacelike separated regions should commute.
Positivity of energy (in the vacuum representation) is required to assure the stability of the theory in the vacuum state.
While the algebraic relations define a model, the same model can be in different states and by the GNS construction
one obtains different representations of the net of algebras, see \cite{SW00, Haag96} and references therein.
% for the discussions on these assumptions
A (chiral) conformal net $\{\cA(I)\}$ on $S^1$ is a family of von Neumann algebras parametrized by intervals $\{I\}$ of $S^1$
satisfying analogous axioms. These should be viewed as the building blocks of two-dimensional ``full'' conformal field theories,
where a pair of chiral conformal nets appears as subnets (``chiral components'') of the full theory \cite{Rehren00, KL04-2}.

Examples of conformal nets on $S^1$ can be constructed
by considering {\it vacuum representations} of the above mentioned infinite-dimensional Lie groups or algebras \cite{BS90, GF93}.
Once we have a conformal net, we can consider its representations \cite{FRS89, FRS92, GF93}, which we refer to as DHR representations 
after Doplicher--Haag--Roberts, who originally considered states with charges localized in bounded regions in four-dimensional Minkowski space \cite{DHR71, DHR74}.

On the one hand, the same infinite-dimensional group or algebra admits not only the vacuum representations but more general
(positive-energy, projective, unitary) representations. On the other hand, one can consider {\it charged} (non-vacuum) DHR representations of the conformal net. The purpose of this paper is to study the relation between these two representation theories.
More specifically, we will show that, for any conformal net which is covariant with respect to the group $\DiffSone$
of orientation-preserving diffeomorphisms of $S^1$
or a conformal net which extends the conformal net generated by the vacuum representation of a loop group, every DHR representation gives rise to
a positive-energy representation of the corresponding infinite-dimensional group (loop groups or $\DiffSone$).

The problem is not new, and indeed several authors studied related problems.
Most importantly, we mention \cite{Henriques19LoopGroups}, where Henriques considered the colimits of the corresponding localized
infinite-dimensional groups and constructed a functor from the representations of a loop group conformal net
to the representations of the corresponding loop group and affine Kac--Moody algebra.
Related questions have been addressed by Carpi \cite{Carpi04} for the Virasoro nets, namely the conformal nets
generated by a vacuum representation of $\DiffSone$.
Our proof strategy is different and based on the local and global structures of the infinite-dimensional groups,
see the end of Section \ref{grouprep} for more results in this direction.

As a consequence of our main result, we show that every factorial DHR representation (in particular, every irreducible or direct sum of irreducibles) is necessarily diffeomorphism covariant. This result has also been proved in \cite{Gui21Categorical}, cf.\ \cite{DFK04},
see Section \ref{covariance} for comparison. As a further consequence, we show that the covariant cocycles of DHR representations are natural with respect to the action of diffeomorphisms. This condition has been explicitly considered in \cite{DG18} and used to prove covariance of conformal net extensions both on $S^1$ and on the two-dimensional Minkowski space, \cite{KL04-1, DG18, MTW18, AGT23Pointed}. 

The proof idea of our main result goes as follows. Both for loop groups and for $\DiffSone$, one can consider the subgroups of elements localized
in proper intervals $I$ of $S^1$. We first show that there are continuous fragmentation maps from a small neighbourhood of the unit element of the corresponding group that represent its elements as the product of elements localized in intervals covering the unit circle $S^1$ and having pairwise small overlaps.
Then the vacuum representation of the given conformal net can be restricted to the localized subgroups, and any other DHR representation can be composed with the fragmentation maps. 
This allows to define a 
local \textit{multiplier representation} with the same cocycle of the vacuum representation on a sufficiently small neighbourhood of the unit element, by gluing together the fragmented elements composed with the DHR representation and by showing that the group law is preserved locally.
As our infinite-dimensional groups have universal simply connected central extensions, we can extend the local multiplier representations to (true, everywhere defined) unitary and also positive-energy representations of the central extensions.

This paper is organized as follows.
In Section \ref{preliminary}, we recall the basic concepts and examples.
Starting with the group-related notions such as local groups and representations, 2-cocycles, central extensions, projective and multiplier representations,
we summarize their constructions and basic facts.
Then, we recall the definition of conformal net on $S^1$ and of DHR representation.
Our basic examples are the loop group nets coming from the vacuum representations of loop groups,
and arbitrary conformal nets with diffeomorphism covariance.
In Section \ref{fragmentation}, we construct the continuous fragmentation maps, first for the loop groups and then for $\DiffSone$.
In Section \ref{grouprp}, we state and prove our main technical result. For a conformal net which contains either a loop group net or the Virasoro net and a given DHR representation, we construct a new local multiplier representation of the corresponding infinite-dimensional group as described above.
In Section \ref{application}, we discuss some applications of these constructions, such as conformal covariance of DHR representations and naturality of covariance cocycles.

\bigskip

We will use the following notations throughout the paper, for the sake of uniform presentation.
 \begin{itemize}
  \item $S^1$ for the circle as manifold, $\bbT$ as group.
  \item $G$ for a compact, connected, simple and simply connected Lie group, while $LG$ its loop group.
  \item $\DiffSone$ for the group of orientation-preserving diffeomorphisms of $S^1$, while
  $\overline{\DiffSone}$ its universal covering group.
  \item $\Gamma = LG, \DiffSone$, $\tilde\Gamma$ its universal central extension, so $\tilde\Gamma = \tilde LG, \Vir$ (see Section \ref{preliminary}).
  \item $e$ or $e_\Gamma$ for the unit element of the group $\Gamma = LG, \DiffSone$, and $0$ for the Lie algebras.
  \item $\cA$ for a generic conformal net, $\rho, \sigma$ its DHR representations, $\rho_0$ the vacuum representation.
  \item $\pi^\Gamma$ generic (possibly projective or multiplier) representation of $\Gamma$ (upper index to distinguish from $\pi = 3.14...$),
  $\tpi_0$ for a generic vacuum representation of $\tilde\Gamma$, $\tpi_\rho$ the induced representation by a DHR representation $\rho$ of $\tilde\Gamma$
  (see Section \ref{grouprp}).
  \item $\cU, \cV$ for open sets in $\Gamma, \tilde\Gamma$.
  \item $\frU, \frV$ for open sets in the infinite-dimensional Lie algebras (loop algebras or the Virasoro algebra).
 \end{itemize}
 
\section{Preliminaries}\label{preliminary}
\subsection{Central extensions and multiplier representations}\label{central}

\paragraph{Local homomorphisms.}
Let $\Gamma, \Gamma'$ be topological groups.
A map $f$ from $\Gamma$ to $\Gamma'$ is called a \textbf{local homomorphism}
if there is a neighbourhood $\cU$ of the unit element $e \in \Gamma$
such that if $\gamma_1, \gamma_2, \gamma_1\gamma_2 \in \cU$, then $f(\gamma_1)f(\gamma_2) = f(\gamma_1\gamma_2)$
\cite[\S 47B]{Pontryagin66}.
In particular, if $\Gamma'$ is the group $\rmU(\cH)$ of unitary operators on a Hilbert space $\cH$,
we call it a (unitary) \textbf{local representation}.
By \cite[Theorem 63]{Pontryagin66}, if $\Gamma$ is a connected, simply connected and locally path connected group (see Lemma \ref{lm:extlocal}),
then any local representation of $\Gamma$ extends to a global representation. 

Moreover, the notion of local homomorphisms can be defined
even for local groups \cite[\S 23D]{Pontryagin66}, namely sets with a local group structure.
Here, we consider a weaker notion (cf. also \cite[\S 23F]{Pontryagin66}):
a \textbf{local group} is a topological space $\Gamma$ and an inclusion of open neighbourhoods $\cV \subset \cU$ of a distinguished element $e\in\Gamma$
such that
\begin{itemize}
 \item If $a, b \in \cV$, then the product $ab \in \cU$ is defined, and the map $a,b \mapsto ab$ is continuous.
 \item If $ab, (ab)c, bc, a(bc)$ are defined, then $(ab)c = a(bc)$.
 \item $ae$ and $ea$ are defined for all $a \in \cU$ and $ae = ea = a$.
 \item If $a \in \cV$, then it has an inverse $a^{-1} \in \cU$, i.e.\! an element such that $aa^{-1} = a^{-1}a = e$,
 and the map $a \mapsto a^{-1}$ is continuous.
\end{itemize}
One can define the notions of local homomorphisms and isomorphisms also for local groups
(for isomorphism one should require that the inverse is continuous as well), cf. \cite[\S 23H]{Pontryagin66}.
Clearly, the composition of two local homomorphisms is again a local homomorphism.
In particular, the composition of a local homomorphism and a local representation is a local representation.

\paragraph{Algebraic central extensions of groups.}
For an abelian group $H$, a {\it central extension of $\Gamma$ by $H$} is a short exact sequence of group homomorphisms of the form $1\rightarrow H\xrightarrow[] {\phi} \tilde\Gamma \xrightarrow[]{\psi} \Gamma\rightarrow 1$, where $1$ denotes the trivial group, such that for every $h\in H$ and $k\in \tilde\Gamma$ one has $\phi(h)k=k\phi(h)$. From now on, by abuse of notation, we will often indicate a central extension just by $\tilde\Gamma$.  
In particular, a central extension is called {\it trivial} if $\tilde\Gamma\cong H\times \Gamma$. 

Having two central extensions of $\Gamma$ by the group $H$, specifically $1\rightarrow H\xrightarrow[] {\phi} \tilde\Gamma \xrightarrow[]{\psi} \Gamma\rightarrow 1$ and $1\rightarrow H\xrightarrow[] {\phi'} \tilde\Gamma' \xrightarrow[]{\psi'} \Gamma\rightarrow 1$, one can investigate whether they are equivalent, namely whether there exists an isomorphism of groups $\Pi: \tilde\Gamma\to \tilde\Gamma'$ such that $\Pi\circ \phi =\phi'\circ \mathrm{id}_H$
and $\mathrm{id}_\Gamma\circ \psi=\psi'\circ\Pi$. We say that a central extension {\it splits} if it is equivalent to a trivial central extension.

Every central extension gives rise to a so-called {\it 2-cocycle}, or just cocycle, on $\Gamma$ with values in $H$, i.e.\ a map $\bfc:\Gamma\times \Gamma\to H$ such that $\bfc(e_\Gamma, e_\Gamma)=e_H$ and $\bfc(\gamma_1,\gamma_2)\bfc(\gamma_1\gamma_2,\gamma_3)= \bfc(\gamma_1,\gamma_2\gamma_3)\bfc(\gamma_2,\gamma_3)$ for every $\gamma_1,\gamma_2,\gamma_3\in \Gamma$. For a section $s$ of the central extension, i.e., a map $s : \Gamma \to \tilde\Gamma$ such that $s(e_{\Gamma})=e_{\tilde\Gamma}$ and $\psi \circ s (\gamma) = \gamma$ for $\gamma \in \Gamma$, the 2-cocycle $\bfc$ is defined as $\bfc(\gamma_1,\gamma_2) := \phi^{-1}(s(\gamma_1)s(\gamma_2)s(\gamma_1\gamma_2)^{-1})$. 

On the other hand, for a given 2-cocycle $\bfc:\Gamma\times \Gamma\to H$, one can construct a central extension of $\Gamma$ by $H$ associated with $\bfc$ given by $1\rightarrow H\xrightarrow[] {\phi} H\times_\bfc \Gamma \xrightarrow[]{\psi} \Gamma\rightarrow 1$, where $H\times_\bfc \Gamma$ is supported on the cartesian product $H\times \Gamma$ and is equipped with the multiplication defined by $(h_1,\gamma_1)\cdot_\bfc(h_2,\gamma_2) := (\bfc(\gamma_1,\gamma_2)h_1h_2,\gamma_1\gamma_2)$ for $(h_1,\gamma_1),(h_2,\gamma_2)\in H\times \Gamma$.
However, for topological groups, $\bfc$ is not necessarily continuous and does not give the right topology to the central extension. For further reading, cf. \cite[Ch.3]{Schottenloher08}.

\paragraph{Topological central extensions.}
Let $1\rightarrow H\xrightarrow[] {\phi} \tilde\Gamma \xrightarrow[]{\psi} \Gamma\rightarrow 1$ 
be a central extension $\tilde\Gamma$ of a group $\Gamma$ by an abelian group $H$. If $\Gamma$, $H$ and $\tilde\Gamma$ are topological groups, $\phi$, $\psi$ are continuous homomorphisms with $\phi^{-1}$ being continuous on its image, then we will call it a {\bf topological central extension}.
If there is a global continuous section $s : \Gamma \to \tilde\Gamma$, that is, $s$ is a section, namely $s(e_{\Gamma})=e_{\tilde\Gamma}$ and $\psi \circ s (\gamma) = \gamma$ for $\gamma \in \Gamma$, which is continuous on every point of $\Gamma$,
then $\bfc(\gamma_1,\gamma_2) := \phi^{-1}(s(\gamma_1)s(\gamma_2)s(\gamma_1\gamma_2)^{-1})$ defines a continuous 2-cocycle on $\Gamma \times \Gamma$ with values in $H$. However, in general, there may not be any global continuous section.

For a neighbourhood $\cU$ of the unit element $e \in \Gamma$,
if we can take a local continuous section $s : \cU \to \tilde\Gamma$ such that $\psi \circ s (\gamma) = \gamma$, $\gamma \in \cU$,
then we can define an $H$-valued \textbf{local continuous cocycle} on $\cU \times \cU$ by $\bfc(\gamma_1,\gamma_2) := \phi^{-1}(s(\gamma_1)s(\gamma_2)s(\gamma_1\gamma_2)^{-1})$.
The map $(h, \gamma) \mapsto \phi(h)s(\gamma) \in \tilde\Gamma$ defined for $h \in H$, $\gamma \in \cU$ is a local isomorphism,
where $H \times \cU$ is equipped with a local group structure defined by $\bfc$ as above,
by taking $\cV \subset \cU$ such that $e \in \cV$ and for any $a, b \in \cV, ab, a^{-1} \in \cU$.

\paragraph{Projective representations and topological central extensions.} It is known that, for a Hilbert space $\mathcal{H}$, the unitary group $\mathrm{U}(\mathcal{H})$ can be seen as a non-trivial central extension of the unitary projective transformation group $\mathrm{U}(\mathbb{P}(\mathcal{H}))$ by $\mathbb{T}$, the unit circle group, cf. \cite[Lemma 3.4]{Schottenloher08}. Note that $\mathrm{U}(\mathcal{H})$ and $\mathrm{U}(\mathbb{P}(\mathcal{H}))$ are topological groups, the group homomorphisms in such central extension are continuous, and $\phi:\mathbb{T}\to\mathrm{U}(\mathcal{H})$ is defined as $\phi(z)=z\,\mathrm{id}_{\mathcal{H}}$, thus $\phi^{-1}$ is continuous on its image. Therefore, such central extension is also topological. 

Let $\Gamma$ be a topological group. A continuous homomorphism $\Gamma\to\mathrm{U}(\mathbb{P}(\mathcal{H}))$ with respect to the strong operator topology is called a \textbf{projective representation} of $\Gamma$. For a given everywhere defined $\bbT$-valued cocycle $c$, a \textbf{multiplier representation} of $\Gamma$ is a continuous map $\pi:\Gamma\to \mathrm{U}(\mathcal{H})$ such that $\pi(\gamma_1)\pi(\gamma_2) = c(\gamma_1, \gamma_2)\pi(\gamma_1\gamma_2)$, for every $\gamma_1,\gamma_2 \in \Gamma$. 

Let $1\rightarrow H\xrightarrow[] {\phi} \tilde\Gamma \xrightarrow[]{\psi} \Gamma\rightarrow 1$ be a topological central extension of $\Gamma$ by $H$
as above. Further assume that $\tpi$ is a continuous unitary representation of $\tilde\Gamma$ on $\cH$
such that $\tpi(\phi(H)) \subset \bbT\bb1$, where $\bb1$ denotes the identity operator on $\cH$. Then $\Gamma\ni\gamma \mapsto \Pi(\gamma) := \tpi(\tilde\gamma)/\bbT$ is clearly a well-defined continuous projective representation of $\Gamma \cong \tilde\Gamma / \phi(H)$, where $\tilde\gamma = \gamma\,\phi(H) \in \tilde\Gamma/\phi(H)$. 

Assume that there is a global continuous section $s : \Gamma \to \tilde\Gamma$, hence, in particular, $s$ is globally continuous and it holds that $\psi \circ s(\gamma) = \gamma$ for every $\gamma\in\Gamma$.
In this case, $\gamma \mapsto \tpi(s(\gamma))$ can be seen as a multiplier representation of $\Gamma$, since it satisfies $\tpi(s(\gamma_1))\tpi(s(\gamma_2)) = c(\gamma_1, \gamma_2)\tpi(s(\gamma_1\gamma_2))$, $\gamma_1,\gamma_2 \in \Gamma$, for the $\bbT$-valued cocycle $c$ defined as follows.
Let $\pi_H := \tpi \circ \phi$, then the following diagram commutes
\begin{align*}
\xymatrix
   {1 \ar[r] & H \ar[r]^{\phi} \ar[d]^{\pi_H} & \tilde\Gamma \ar[r]^{\psi} \ar[d]^{\tpi}& \Gamma\ar[r] \ar[d]^{\Pi}& 1 \\
    1 \ar[r]& \mathbb{T} \ar[r]& \mathrm{U}(\mathcal{H}) \ar[r]& \mathrm{U}(\mathbb{P}(\mathcal{H})) \ar[r]& 1 \\
   }
\end{align*}
and let $c(\gamma_1,\gamma_2) := \pi_H(\bfc(\gamma_1, \gamma_2))$, $\gamma_1,\gamma_2 \in \Gamma$, where $\bfc$ is the $H$-valued cocycle defined above.

\paragraph{Local multiplier representations.}
Let $\Gamma$ be a topological group. If there are a neighbourhood $\cU$ of the unit element $e \in \Gamma$, a continuous map $\pi$ from $\cU$ into $\rmU(\cH)$, a continuous map $c: \cU\times \cU \to \bbT$
satisfying the cocycle identity $c(\gamma_1,\gamma_2)c(\gamma_1\gamma_2,\gamma_3) = c(\gamma_1,\gamma_2\gamma_3)c(\gamma_2,\gamma_3)$ such that $\pi(\gamma_1)\pi(\gamma_2) = c(\gamma_1,\gamma_2)\pi(\gamma_1\gamma_2)$ with $\gamma_1,\gamma_2,\gamma_3 \in \cU$, we say that $\pi$ is a \textbf{local multiplier representation} of $\Gamma$. Furthermore, if $\Pi$ is a continuous projective representation of $\Gamma$ on $\cH$ such that $\pi(\gamma)/\bbT = \Pi(\gamma)$ for $\gamma\in\cU$, then $\pi$ is a local multiplier representation associated with $\Pi$.

Let $\tilde\Gamma$ be a topological central extension of $\Gamma$ by $H$, $s$ a local continuous section on $\cU$
and $\bfc$ be the corresponding continuous local $H$-valued cocycle.
If $\tpi$ is a continuous unitary representation of $\tilde\Gamma$ such that $\tpi(\phi(H)) \subset \bbT\bb1$,
then we obtain a continuous local multiplier representation $\pi := \tpi\circ s : \cU \to \rmU(\cH)$ of $\Gamma$ with continuous local $\bbT$-valued cocycle $c$ defined by
$\pi_H := \tpi\circ \phi$ and $c(\gamma_1,\gamma_2) := \pi_H(\bfc(\gamma_1, \gamma_2))$ where $\bfc$ is defined as above.

Let $\pi$ be a continuous local multiplier representation of $\Gamma$ with continuous local $\mathbb{T}$-valued cocycle $c$. Suppose further that there is a continuous representation $\pi_H$ of $H$ with values in $\bbT$ such that $c(\gamma_1,\gamma_2) = \pi_H(\bfc(\gamma_1, \gamma_2))$. Then we can see $(\pi_H\times \pi)(h, \gamma) := \pi_H(h)\pi(\gamma)$ as a continuous local unitary representation of $\tilde\Gamma$ through the local isomorphism $(h, \gamma) \mapsto \phi(h)s(\gamma) \in \tilde\Gamma$ of $H\times_\bfc\Gamma$ on $\tilde\Gamma$ (in this case, one has $H\times_\bfc \cU\simeq\phi(H)s(\cU)$). 

Summarizing and applying \cite[Theorem 63]{Pontryagin66}, in the notation above, we have the following.

\begin{lemma}\label{lm:extlocal}
 Let $\Gamma$ be a topological group, $H$ a topological abelian group, $\tilde\Gamma$
 a topological central extension of $\Gamma$ by $H$ which is connected, simply connected and locally path connected\footnote{Such assumptions on the topological central extension $\tilde\Gamma$ of $\Gamma$ by $H$ are required to apply \cite[Theorem 63]{Pontryagin66}, although the terminology \lq\lq local connectedness" in the statement of \cite[Theorem 63]{Pontryagin66} is more commonly referred to as \lq\lq local path connectedness", see \cite[\S 46J]{Pontryagin66}. }.
 
 Assume that there is a neighbourhood $\cU \subset \Gamma$ of the unit element $e\in\Gamma$ and a local continuous section $s$ which defines a 2-cocycle $\bfc$ such
 that
 $\tilde\Gamma$ and $H\times_\bfc \Gamma$ are locally isomorphic. Let $\tpi_0$ be a continuous unitary representation of $\tilde\Gamma$ such that $\tpi_0(\phi(H)) \subset \bbT\bb1$
 and let $c$ be the $\bbT$-valued cocycle of the local multiplier representation of $\Gamma$ given by $c(\gamma_1,\gamma_2) := \pi_0(\bfc(\gamma_1, \gamma_2))$, where $\pi_0 := \tpi_0 \circ s$.

For every continuous local multiplier representation $\pi_\rho$ of $\Gamma$ with the same cocycle $c$, $\pi_H\times \pi_\rho$ extends to a (global) unitary representation of $\tilde\Gamma$.
 \end{lemma}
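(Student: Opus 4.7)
The plan is to combine the local product construction with Pontryagin's extension theorem. Concretely, I would first package $\pi_H \times \pi_\rho$ into a bona fide local unitary representation of $\tilde\Gamma$, and then use the simple connectedness assumption to extend it globally.

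First, I would verify on a sufficiently small symmetric neighbourhood $\cV \subset \cU$ (with $\cV\cV, \cV^{-1} \subset \cU$) that $(h,\gamma) \mapsto \pi_H(h)\pi_\rho(\gamma)$ is a local homomorphism from $H \times_\bfc \cU$ to $\rmU(\cH)$. The check is direct: using that $\pi_H$ is a group homomorphism with values in $\bbT\bb1$ (hence central), and that $\pi_\rho$ has multiplier $c = \pi_H \circ \bfc$, one computes
\begin{align*}
(\pi_H\times\pi_\rho)(h_1,\gamma_1)(\pi_H\times\pi_\rho)(h_2,\gamma_2)
&= \pi_H(h_1 h_2)\,c(\gamma_1,\gamma_2)\,\pi_\rho(\gamma_1\gamma_2) \\
&= \pi_H\!\bigl(\bfc(\gamma_1,\gamma_2) h_1 h_2\bigr)\pi_\rho(\gamma_1\gamma_2)\\
&= (\pi_H\times\pi_\rho)\bigl((h_1,\gamma_1)\cdot_\bfc(h_2,\gamma_2)\bigr),
\end{align*}
whenever the product $(h_1,\gamma_1)\cdot_\bfc(h_2,\gamma_2)$ is defined in $H\times_\bfc\cU$. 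Continuity follows from the continuity of $\pi_H$, $\pi_\rho$ and of the cocycle $\bfc$.

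Next, I would transport this local representation to a neighbourhood of $e_{\tilde\Gamma} \in \tilde\Gamma$ using the hypothesis that $\tilde\Gamma$ and $H\times_\bfc\Gamma$ are locally isomorphic. Explicitly, the local isomorphism $(h,\gamma)\mapsto \phi(h)s(\gamma)$ (discussed just before the statement) identifies a neighbourhood of $e_{\tilde\Gamma}$ with $H\times_\bfc\cV$, so pre-composing with its inverse yields a continuous local unitary representation $\tilde\pi_\rho$ of $\tilde\Gamma$ on some neighbourhood of $e_{\tilde\Gamma}$. By construction, $\tilde\pi_\rho\circ\phi = \pi_H$ on a neighbourhood of $e_H$ and $\tilde\pi_\rho\circ s = \pi_\rho$ on $\cV$.

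Finally, the extension step invokes \cite[Theorem 63]{Pontryagin66}: since $\tilde\Gamma$ is connected, simply connected and locally path connected, every local representation of $\tilde\Gamma$ extends uniquely to a global continuous unitary representation on all of $\tilde\Gamma$. Applied to $\tilde\pi_\rho$, this produces the desired extension of $\pi_H\times \pi_\rho$. The main (mild) obstacle is really bookkeeping: keeping track of the various neighbourhoods $\cV\subset\cU$ on which associativity, inverses, the local section $s$, the cocycle $\bfc$, and the local isomorphism with $H\times_\bfc\Gamma$ are simultaneously defined, so that the local homomorphism identity above makes sense before appealing to Pontryagin's extension result.
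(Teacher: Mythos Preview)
Your proposal is correct and follows exactly the approach the paper intends: the lemma is stated there as a summary of the preceding discussion (constructing the local representation $(\pi_H\times\pi_\rho)(h,\gamma)=\pi_H(h)\pi_\rho(\gamma)$ of $H\times_\bfc\Gamma$, transporting it via the local isomorphism $(h,\gamma)\mapsto\phi(h)s(\gamma)$ to $\tilde\Gamma$) together with an application of \cite[Theorem 63]{Pontryagin66}. Your explicit verification of the local homomorphism identity and your remarks on the bookkeeping of neighbourhoods simply make explicit what the paper leaves implicit.
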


\begin{remark}
Modifying the definitions accordingly, Lemma \ref{lm:extlocal} holds true even if the (local) representations are not unitary, since the range would be contained into invertible operators of the Hilbert space $\cH$, which is a connected group. 
\end{remark}

\subsection{Conformal nets}\label{sec:conformalnet}

Let $\mathcal{I}$ be the set of non-empty open non-dense (proper) intervals $I$ of the unit circle $S^{1}$, and denote $I' := (S^{1}\setminus I)^{\circ} \in \cI$ the interior of the complement of $I\in\mathcal{I}$. Consider the group $\mathrm{PSL}(2,\bbR) := \mathrm{SL}(2,\bbR)/\{\pm 1\}$ which acts on $S^{1}$ by M\"obius transformations. Consider also the group  $\Diff_+(S^1)$ of orientation-preserving diffeomorphisms of $S^1$. Denote $\cB(\cH)$ the algebra of bounded operators on $\cH$ and by $\rmU(\cH)$ the unitary subgroup, as in the previous section.
For a subset $\cM \subset \cB(\cH)$, denote $\cM' = \{x\in\cB(\cH): xy = yx \text{ for all } y\in\cM\}$ the commutant of $\cM$ in $\cB(\cH)$.

A {\bf M\"obius covariant net} on $S^{1}$ is a triple $(\cA, U, \Omega)$ consisting of a family of von Neumann algebras indexed by $\cI$ and acting on a common complex separable Hilbert space $\cH_0$, $\cA=\left\{\cA(I) \subset\cB(\cH_0): I\in\mathcal{I}\right\}$, a strongly continuous unitary representation $U : \mathrm{PSL}(2,\bbR) \to \rmU(\cH_0)$, and a unit vector $\Omega \in \cH_0$, satisfying the following properties
\begin{enumerate}
\item[(i)] \textbf{Isotony}: $\cA(I_{1})\subset\cA(I_{2})$, if $I_{1}\subset I_{2}$, $I_{1},I_{2}\in \mathcal{I}$.
\item[(ii)] \textbf{Locality}: $\cA(I_{1})\subset\cA(I_{2})'$, if $I_{1}\cap I_{2}=\emptyset$, $I_{1},I_{2}\in \mathcal{I}$.
\item[(iii)] \textbf{M\"obius covariance}: $U(\gamma)\cA(I)U(\gamma)^{-1}=\cA(\gamma I)$ for every $I\in\mathcal{I}$, $\gamma\in \mathrm{PSL}(2,\bbR)$.
\item[(iv)] \textbf{Positivity of energy}: The generator of the one-parameter rotation subgroup of $\mathrm{PSL}(2,\bbR)$ has non-negative spectrum.
\item[(v)] \textbf{Vacuum vector}: $\Omega$ is the unique vector (up to a phase) such that
$U(\gamma)\Omega=\Omega$ for every $\gamma\in \mathrm{PSL}(2,\bbR)$. Moreover, $(\bigvee_{I\in\mathcal{I}}\cA(I))\Omega$ is dense in $\cH_0$, where $\bigvee_{I\in\mathcal{I}}\cA(I)$ denotes the von Neumann algebra generated in $\cB(\cH_0)$ by $\cA(I)$, $I\in\mathcal{I}$.
\end{enumerate}

The algebras $\cA(I)$ are referred to as the \textbf{local algebras} of $\cA$ and $\cH_0$ as the \textbf{vacuum Hilbert space}. By abuse of notation, we will refer to a conformal net just as the local algebras $\cA(I)$. The defining representation of each $\cA(I)$ on $\cH_0$ will be denoted by $\rho_0$ and referred to as the \textbf{vacuum representation} of $\cA$. In the sequel we will identify $\cA(I)$ and $\rho_0(\cA(I))$, namely we will think of $\rho_0$ as $\cA(I)\hookrightarrow\cB(\cH_0)$. 
With these assumptions, it follows automatically that each $\cA(I)$ is a type III factor
and \textbf{Haag duality}: $\cA(I)'=\cA(I')$, cf. \cite{GF93}.

A \textbf{conformal net} on $S^1$ is a M\"obius covariant net $(\cA,U,\Omega)$ as above that satisfies in addition
\begin{enumerate}
\item[(vi)] \textbf{Diffeomorphism covariance}: The representation $U$ of $\PSL(2,\bbR)$ extends to a strongly continuous projective representation of $\Diff_+(S^1)$, again denoted by $U$, such that
\begin{align*}
U(\gamma)\cA(I)U(\gamma)^{-1}&=\cA(\gamma I), \quad\gamma\in\Diff_+(S^1),\\
U(\gamma)xU(\gamma)^{-1}&=x,\quad x\in \cA(I), \gamma\in\Diff_+(I^\prime)
\end{align*}
for every $I\in\mathcal{I}$, where $\Diff_+(I^\prime) \subset \Diff_+(S^1)$ denotes the subgroup of diffeomorphisms $\gamma$ with $\supp \gamma \subset I'$.
\end{enumerate}

For background and references we refer to \cite[Section 3]{CKLW18}, \cite{GF93}.

\subsection{DHR representations}\label{sec:DHR}

Together with the (defining) vacuum representation $\rho_0$ on $\cH_0$, a conformal net may have non-trivial ``charged'' representations.
A \textbf{representation} (or \textbf{DHR representation}) of a conformal net $\cA$ is a family
$\rho = \{\rho_I : \cA(I) \to \cB(\cH_\rho)\}_{I\in\cI}$ of normal (i.e., $\sigma$-weakly continuous) representations
of each $\cA(I)$ on a fixed Hilbert space $\cH_\rho$ with the following \textbf{compatibility condition}:
If $I_1\subset I_2$, then $\rho_{I_2}|_{\cA(I_1)} = \rho_{I_1}$.
Two DHR representations $\rho$ and $\sigma$ are called unitarily equivalent if there is a unitary $U : \cH_\rho \to \cH_\sigma$ such that $U \rho_I(x) = \sigma_I(x) U$ for every $I \in \cI$ and $x \in \cA(I)$. A DHR sector is a unitary equivalence class of DHR representations.

As every local algebra $\cA(I)$ is a type III factor, each $\rho_I$ is a spatial isomorphism (unitary equivalence), i.e., $\rho_I = \cAd V_{\rho, I}$ on $\cA(I)$ for some unitary operator $V_{\rho,I} : \cH_0 \to \cH_\rho$, where $\cAd V_{\rho, I}(\cdot):=V_{\rho, I}\cdot V^*_{\rho, I}$ on $\cA(I)$. In particular, $\cAd V^*_{\rho, I} \circ \rho_I = \rho_{0,I}$ on $\cA(I)$. In other words, for every DHR representation $\rho$ and every fixed interval $I \in \cI$, the unitarily equivalent DHR representation $\cAd V^*_{\rho, I} \circ \rho := \{\cAd V^*_{\rho, I} \circ \rho_J : \cA(J) \to \cB(\cH_0)\}_{J\in\cI}$ acts on the vacuum Hilbert space and it coincides with $\rho_{0,I}$ on $\cA(I)$. 

A DHR representation $\sigma$ with the property that $\sigma_I=\rho_{0,I}$ on $\cA(I)$ is called \lq\lq localized" in the complementary interval $I' \in \cI$, and every DHR representation is \lq\lq localizable" in each interval. Moreover, if $\sigma$ is localized in $I'$, it gives rise to endomorphisms (also called \textbf{DHR endomorphisms}) of each $\cA(K)$ associated with every bigger interval $K\in\cI$, $K \supset I'$, i.e., $\sigma_K (\cA(K)) \subset \cA(K)$.

We refer to \cite[Section IV]{GF93}, \cite[Section 2]{GL96} for background on DHR representations of conformal nets, and to \cite{DHR71, DHR74} for the original context regarding DHR representations.

\subsection{Example: Loop group nets}\label{LG}

\subsubsection{The loop group and its basic central extension}

We take a simple, compact, connected and simply connected finite-dimensional Lie group $G$.
We consider the loop group
\begin{align*}
 LG := \{f \in C^\infty(S^1, G)\} \cong \{f \in C^\infty(\bbR, G) : f(t + 2\pi) = f(t)\},
\end{align*}
with pointwise multiplication, see \cite{PS86}, \cite[Introduction]{Neeb14Semibounded}.

The loop algebra
\begin{align*}
 L\frg := \{\xi \in C^\infty(S^1, \frg)\} \cong \{\xi \in C^\infty(\bbR, \frg) : \xi(t + 2\pi) = \xi(t)\},
\end{align*}
is also equipped with the structure of the Lie algebra with the pointwise Lie bracket, namely for $\xi,\eta\in L\frg$, $[\xi,\eta]_{L\frg}(t):=[\xi(t),\eta(t)]_{\frg}$ for every $t\in S^1$.

\paragraph{Topology and manifold structure of $LG$.}

The topology in $LG$ is given by the uniform convergence of functions together with all their derivatives, namely, a sequence $\{\gamma_n\}$ converges to $\gamma$ if $\frac{d^k\gamma_n}{d\theta^k}$ converges uniformly to $\frac{d^k\gamma}{d\theta^k}$ for every $k\in\mathbb{N}$.
The pointwise exponential map $\mathrm{exp}_{LG}: L\mathfrak{g}\to LG$ is a local homeomorphism between any open neighbourhood $\cU$ of the unit element $e_{LG}$ and
$\frU :=\mathrm{exp}_{LG}^{-1}(\cU)$, which is an open neighbourhood of $0_{L\mathfrak{g}}$, cf. \cite[Section 3.2]{PS86}. 
Moreover, $LG$ is simply connected, see \cite[after Proposition (4.4.2)]{PS86}.
The loop algebra $L\frg$ is the Lie algebra of $LG$ in the sense of infinite-dimensional manifold, cf. \cite[Section 3.2]{PS86}, \cite[Appendix A]{NW09CentralExtensions}.

For each $I \in \cI$, we define the subgroup $LG_I$ of $LG$ of loops supported in $I$
(in this paper, by ``support'' of a loop $\gamma$ we mean the closure of the set of $x\in S^1$ such that $\gamma(x) \neq e_G$).

\paragraph{The basic central extension $\tilde LG$.}
As $G$ is simple, there is a symmetric invariant bilinear form $\<\cdot, \cdot\>$ on $\frg$
unique up to a scalar factor, the Killing form. We normalize that form in such a way that
$\<h_\alpha, h_\alpha\> = 2$ for all longest roots $h_\alpha$ \cite[Section 4.4]{PS86}, \cite[Definition 3.3]{Neeb14Semibounded}.
Using the normalized Killing form, we define a 2-cocycle on $L\frg$ by
\begin{align*}
 \omega(\xi, \eta) := \frac1{2\pi} \int_0^{2\pi} \<\xi(t), \eta'(t)\> dt.
\end{align*}
Furthermore, the group $\DiffSone$ acts on $L\frg$ by composition and $\omega$ is invariant under its action, i.e., for every $f \in \DiffSone$, $\omega(\xi\circ f, \eta\circ f)=\omega(\xi,\eta)$, for all $\xi,\eta\in L\frg$.
We can construct the central extension $\tilde L\frg = L\frg \times_\omega \bbR$ of $L\frg$ by $\bbR$ by taking $\tilde L\frg=L\frg\oplus\mathbb{R}$ as vector space, and by setting
\begin{align*}
 [(\xi, \alpha), (\eta, \beta)] := ([\xi, \eta], \omega(\xi,\eta)).
\end{align*}
By \cite[Theorem 4.4.1 (iv), (i)]{PS86}, the chosen normalization for the Killing form in $G$ ensures that the Lie algebra central extension $\tilde L\frg = L\frg \times_\omega \bbR$ of $L\frg$ by $\bbR$ gives rise to a Lie group central extension $1 \to \bbT \to \tilde LG \to LG \to 1$, which is the unique simply connected central extension $\tilde LG$ of $LG$ by $\bbT$ by \cite[Proposition (4.4.6)]{PS86}, the so-called {\it basic} central extension of $LG$. Moreover, $\DiffSone$ also acts on the group central extension $\tilde LG$. 

Let us describe the central extension $\tilde LG$ in more detail, cf.\ \cite[Sections 4.4, 4.5]{PS86}.
We should see $\omega$ as a skew-symmetric bilinear form on the tangent space $L\frg$ of the unit element of $LG$.
Then it defines a left-invariant $2$-form $\omega$ on $LG$ as follows (with an abuse of notations, we denote it by $\omega$ as well).
For each other point $\gamma \in LG$, any tangent vector on $T_\gamma LG$ can be seen as $\xi_\gamma$,
where $\xi \in L\frg = T_e LG$ and $\xi_\gamma$ is the image of the differential $T_e LG \to T_\gamma LG$ (the pushforward) of the left-multiplication
(cf.\! \cite[Proposition 4.4.2]{PS86}).
We define the $2$-form $\omega$ on $LG$ as the collection of 2-forms $\omega_\gamma$ on $T_{\gamma}LG$ defined by $\omega_\gamma(\xi_\gamma, \eta_\gamma) := \omega(\xi, \eta)$ for each point $\gamma\in LG$. 
In this way, $\omega$ is a left-invariant $2$-form on $LG$.
For a piecewise smooth loop $\ell$ on the manifold $LG$, let $C(\ell) := \exp (i\int_\sigma \omega) \in \bbT$,
where $\sigma$ is a piece of surface in $LG$ that has $\ell$ as the boundary. $C$ is well-defined
because $\omega/2\pi$ is integral by \cite[Theorem (4.4.1)]{PS86}. It is furthermore independent of the parametrization,
additive with respect to the concatenation of loops, and left-invariant with respect to $LG$.
We can construct a group extension as follows:
Consider the set of triples $(\gamma, p, u)$ where $\gamma \in LG$, $p$ a piecewise-smooth path from $e$ to $\gamma$,
and $u \in \bbT$. We introduce an equivalence relation between $(\gamma, p, u)$ and $(\gamma', p', u')$
by $\gamma = \gamma', u = C(p' * p^{-1})u'$, where $*$ denotes the concatenation of paths (first $p'$, then $p^{-1}$).
On the quotient by this equivalence relation, which we denote by $\tilde LG$,
we define the group operation $(\gamma_1, p_1, u_1)\cdot (\gamma_2, p_2, u_2) = (\gamma_1 \gamma_2, p_1 * (\gamma_1 p_2), u_1 u_2)$
(to show that the relation is an equivalence relation one uses the additivity of $C$, while the independence of the parametrization, additivity and left-invariance of $C$ are used to show that the group operation is well-defined).

Recall that the exponential map is a local homemorphism between any open neighbourhood $\cU$ of the unit element $e_{LG}$ and
the corresponding open neighbourhood $\frU :=\mathrm{exp}_{LG}^{-1}(\cU)$ of $0_{L\mathfrak{g}}$. Hence for $\gamma$ in a small neighbourhood $\cU$ of the unit element in $LG$, there exists a unique path $p_\gamma$ from $e$ to $\gamma$, defined using the exponential map. Therefore, the map $(\gamma, u) \mapsto (\gamma, p_\gamma, u)$ is a bijection between $\mathcal{U}\times\mathbb{T}$ in $LG \times \bbT$ and its image in $\tilde LG$. Thus, its inverse provides a local trivialization of $\tilde LG$.
The product of two elements $(\gamma_1, u_1), (\gamma_2, u_2)$ in $\cU\times\bbT$ is defined through the product of their images $(\gamma_1, p_{\gamma_1},u_1)$, $(\gamma_2, p_{\gamma_2}, u_2)$. The pre-image of such product corresponds to another element $(\gamma_1\gamma_2, \tilde c(\gamma_1, \gamma_2)u_1 u_2)$, where $\tilde c$ is a map $\cU\times \cU\to\bbT$, specific to this local trivialization. Using the associativity of the product in $\tilde LG$, it is straightforward to show that $\tilde c$ is a local 2-cocycle.
With this local trivialization, we have $(\gamma, u) = (\gamma, 1)\cdot (e, u) = (e, u)\cdot (\gamma, 1)$.
For more detail about the fibre bundle structure of $\tilde LG$, we refer to \cite[Section 4.5]{PS86}.
As $\omega$ is invariant under the action of $\DiffSone$, by \cite[Theorem 4.4.1 (i),(ii)]{PS86}, it lifts to $\tilde LG$.

Clearly, $\tilde LG$ is locally path connected.

\subsubsection{Positive-energy representations}
As for representations of the loop group $LG$, if we do not specify, we consider \textbf{strongly continuous unitary representations} \cite{Neeb14Semibounded}:
a representation $\tpi^{\tilde LG}$ of the basic\footnote{Later, we may omit the term ``basic'' and refer to $\tilde LG$ just as central extension of $LG$.} central extension $\tilde LG$ by unitary operators on a Hilbert space $\cH$ such that for all $\xi \in \cH$, $\gamma \mapsto \tpi^{\tilde LG}(\gamma)\xi$ is continuous in the norm topology of $\cH$.
Recall that $\DiffSone$ acts on $\tilde LG$, and thus $S^1$ acts on $\tilde LG$ by rotations. If one considers the universal cover $\bbR$ of $S^1$, for a strongly continuous (defined as before) unitary representation of $\tilde LG \rtimes \bbR$ on a Hilbert space $\cH$, by strong continuity we can consider the generator of the $\bbR$-part. A \textbf{positive-energy representation} of $\tilde LG$ is a strongly continuous unitary representation
of $\tilde LG \rtimes \bbR$ on a Hilbert space $\cH$ such that the generator of the $\bbR$-part has spectrum contained in $\bbR_{\ge0}$. If one identifies the elements of $G$ with the subgroup of constant loops of $LG$, then the compact group $G \times \bbT$ embeds in the group central extension $\tilde LG$, since for constant loops of $LG$ the derivative $\xi'$ vanishes, and so does the 2-form $\omega$. Furthermore, in any \emph{irreducible} positive-energy representation of $\tilde LG$, the central $\bbT$-part of $\tilde LG$ is represented by a scalar using Schur's Lemma. Moreover, the lift of the $2\pi$-rotation from $S^1$ to $\bbR$ in $\tilde LG \rtimes \bbR$ commutes with all elements of $\tilde LG$ since $\omega$ is invariant under the action of $\mathrm{Diff}_+(S^1)$, thus it must be represented by a scalar in any irreducible representation. Hence the generator of the $\bbR$-part has discrete spectrum. On elements of $G\times\bbT\hookrightarrow \tilde LG$, when any positive-energy irreducible representation is restricted to the lowest eigenspace $\cH_h$ with respect to $\bbR$, we can define an invariant of the representation, $(\lambda, k)$, where $\lambda$ is the lowest weight of $G$ on $\cH_h$, and $k \in \bbR$. All \textit{smooth}\footnote{Let $\cH^\infty$ be the subspace of vectors such that the orbit map is smooth, hence on which the elements of $L\frg$ can be applied arbitrarily many times
\cite[Section 3]{Neeb10OnDifferentiableVectors}. A representation is said to be smooth if $\cH^\infty$ is dense in $\cH$.} irreducible positive-energy representations of $\tilde LG$ are classified by an integer $k$ called
 the \emph{level} and the \emph{lowest weight} $\lambda$ \cite[Theorem 9.3.5]{PS86}.

Actually, any positive-energy representation $\tpi^{\tilde LG}$ is smooth \cite[Theorem 2.16]{Zellner16}.
This allows one to pass to the positive-energy representation $\rmd \tpi^{\tilde LG}$ of $\tilde L\frg$,
and in particular, to the subalgebra $\tilde L^{\mathrm{pol}}\frg$ spanned by $e_n \otimes x$ and $c$ (the central element), where $e_n(\theta) = e^{in\theta}$ and $x\in\frg$ (see \cite[Example 2.4]{Neeb14Semibounded} for these notations).
Each operator in the range of the restriction of $\rmd \tpi^{\tilde LG}$ to $\tilde L^{\mathrm{pol}}\frg$ preserves, and hence can be restricted to,
the subspace $\cH^{\mathrm{fin}}$ spanned by the eigenvectors of the lift of rotations $S^1$.
On $\cH^{\mathrm{fin}}$, one can also construct a representation of the Virasoro algebra $\{L_n : n \in \bbZ\}$ using the Sugawara formula \cite[\S 2]{GW84}.

As we saw above, $\tilde LG$ has a local trivialization, thus we can choose a neighbourhood $\cU$ of the unit element $e$ of $LG$
that trivializes $\tilde LG$ as $\bbT$-bundle.
A (positive-energy) representation $\tpi^{\tilde LG}$ restricted to $\tilde LG$ and composed with the inverse of the local trivialization can be seen as a \textbf{local multiplier representation} of $LG$ (``local'' refers to the restriction to $\cU$):
there is a 2-cocycle $c(\gamma_1, \gamma_2)\in\bbT$ such that $\tpi^{\tilde LG}(\gamma_1)\tpi^{\tilde LG}(\gamma_2) = c(\gamma_1, \gamma_2)\tpi^{\tilde LG}(\gamma_1 \gamma_2)$ for $\gamma_1, \gamma_2\in \cV$ in a smaller neighbourhood $\cV \subset \cU$ such that $\cV^2 \subset \cU$.

Among the irreducible positive-energy representations, for each level $k \in \bbN$ there is a special case, $\lambda = 0$,
the \textbf{vacuum representation} denoted by $\tpi_{G,k}$. In this case, the lowest weight vector $\Omega$ (the \textbf{vacuum vector})
is annihilated by $L_1, L_0, L_{-1}$. From this, we can construct a conformal net \cite[Section III.9]{GF93} by
\begin{align*}
 \cA_{G,k}(I) := \{\tpi_{G,k}(\gamma): \gamma \in \tilde LG_I\}'',\quad I\subset S^1,
\end{align*}
where $\tilde LG_I$ is the preimage of the subgroup $LG_I$ of loops supported in $I$ defined above.

Together with the (projective) representation $U$ of $\DiffSone$ generated by $\{L_n : n \in \bbZ\}$ by the Sugawara construction and the vacuum vector $\Omega$,
$(\cA_{G,k}, U, \Omega)$ is a conformal net. It is called the \textbf{loop group net} of $G$ at level $k$.

\subsection{Example: Virasoro nets}
\subsubsection{The diffeomorphism group and the Virasoro group}\label{diff}
Let us consider the group $\DiffSone$ of the orientation-preserving diffeomorphisms of the circle $S^1$.
By identifying $S^1 \cong \bbR/2\pi\bbZ$, we can realize its universal covering group as
\begin{align*}
 \overline{\DiffSone} \cong \{\gamma \in C^\infty(\bbR, \bbR) : \gamma(t + 2\pi) = \gamma(t) + 2\pi, \text{ and for all } t, \gamma'(t) > 0 \},
\end{align*}
with the composition as group operation. The unit element in $\overline{\DiffSone}$ is the identity function $e(t) = t$ for every $t\in\bbR$.
We can obtain $\DiffSone$ by taking the quotient of $\overline{\DiffSone}$ with the subgroup generated by (the lifts of) $2\pi$-rotation.

$\DiffSone$ can be given a structure of infinite-dimensional Lie group modeled on the Lie algebra of smooth vector fields
\begin{align*}
 \Vect &:= \{f \in C^\infty(S^1, \bbR)\} \cong \{f \in C^\infty(\bbR, \bbR) : f(t + 2\pi) = f(t) + 2\pi \},\\
 [f,g] &:= f'g - fg'.
\end{align*}

\paragraph{Topology and the manifold structure}
The Lie algebra $\Vect$ is equipped with the topology of the uniform convergence of all derivatives and becomes a topological (Frech\'et) vector space.
The neighbourhood system of $0$ in $\Vect$ is generated by the following sets, where $\epsilon_0, \cdots, \epsilon_n > 0$,
\begin{align*}
 \frU_{\epsilon_0, \cdots, \epsilon_n} := \{f \in \Vect: |f^{(j)}(t)| < \epsilon_j, j = 0, \cdots, n,\, t\in[0,2\pi]\}.
\end{align*}
Correspondingly, neighbourhoods of the unit element $e = \id$ (for simplicity, we denote it as $e$ instead of $e_{\uDiffSone}$) is given by
\begin{align*}
 \cU_{\epsilon_0, \cdots, \epsilon_n} := \{\gamma \in \uDiffSone: |(\gamma - e)^{(j)}(t)| < \epsilon_j, j = 0, \cdots, n,\, t\in\bbR\},
\end{align*}
where $\epsilon_0, \cdots, \epsilon_n > 0$ and $\gamma(t), e(t) = t$ are considered as elements in $C^\infty(\bbR, \bbR)$ as above\footnote{Note that $\DiffSone$ is a special case of the diffeomorphism group of a differentiable manifold $M$ of all the smooth maps $M\to M$. It can be endowed with two different topologies, called weak and strong topology. When $M$ is compact, the two topologies are equivalent, cf. \cite{Hir12}.  

However, even if $\bbR$ is not compact, the two topologies inherited by $C^{\infty}(\bbR,\bbR)$ coincide since we are considering only a subgroup of $C^{\infty}(\bbR,\bbR)$. One can show this fact by showing that a system of neighbourhoods of the identity in $\uDiffSone$ is equivalent to a system of neighbourhoods of the identity of $\DiffSone$.}.
For $\epsilon_0, \cdots, \epsilon_n$ small, the map $\gamma \mapsto \gamma - e$
gives a coordinate around $e$. By acting on it by the left-regular action,
we obtain a chart on $\uDiffSone$ that turns it in a Frech\'et manifold.

For each interval $I \subset S^1$, we consider the subgroup $\Diff_+(I) \subset \Diff_+(S^1)$ of diffeomorphisms supported in $I$ (in this paper, by ``support'' of a diffeomorphism $\gamma$ we mean the closure of the set of $x\in S^1$ such that $\gamma(x) \neq x$)

\paragraph{The central extension}
The Lie algebra $\Vect$ has a central extension $\bbR\times_{\bfc} \Vect$ with Lie bracket $[(\alpha, f), (\beta, g)] := (\bfc(f,g), [f, g])$, 
where
\begin{align*}
\bfc(f,g) := -\frac1{2\pi i}\int_0^{2\pi} f(t)(g'(t) + g'''(t))dt 
\end{align*}
cf. \cite[Definition 2.17]{Zellner16}.
If one complexifies $\Vect$ by taking instead complex-valued smooth vector fields on $S^1$, denoted by $\Vect_{\bbC}$, then one can complexify $\bbR\times_{\bfc} \Vect$ to the central extension of $\Vect_{\bbC}$ by $\bbR$. 

Since every $f\in\Vect$  can be written as a Fourier series, without loss of generality, it is enough to consider the monomials $L_n(\theta) := e^{\rmi n\theta}$. They satisfy the Virasoro algebra commutation relations, cf. \cite[Chapter 5]{Schottenloher08}, 
\begin{align*}
 [L_m, L_n] = (m-n)L_{m+n} + \frac{m(m^2-1)}{12}\delta_{m,-n}C,
\end{align*}
where $C$ is the central element in the $\bbR$-part of the complexified $\bbR\times_{\bfc} \Vect$.

Correspondingly, the group $\uDiffSone$ has a central extension, the Virasoro group $\Vir := \bbR\times_\bfB \uDiffSone$, 
where the Bott cocycle $B$ is defined for $\DiffSone \times \DiffSone$ \cite[Section 4]{NS15}, \cite[Section 3.1]{FH05}, by
\begin{align*}
 \bfB(\gamma_1, \gamma_2) &:= -\frac1{48\pi}\int_0^{2\pi} \log((\gamma_1\circ \gamma_2)'(t)) d\log\gamma'_2(t) \\
 &= -\frac1{48\pi}\int_0^{2\pi} \log((\gamma_1\circ \gamma_2)'(z)) \frac{\gamma''_2(t)}{\gamma'_2(t)}dt,
\end{align*}
that is, the group relation is given by
\begin{align*}
 (a_1, \gamma_1)\cdot (a_2, \gamma_2) = (a_1 + a_2 + \bfB(\gamma_1, \gamma_2), \gamma_1\circ\gamma_2).
\end{align*}
Also here, $\Vir\simeq \bbR\times \uDiffSone$ as manifolds, hence $\Vir$ is connected, simply connected and locally path connected, and it is called the Virasoro group.

Note that, by simple calculations, the Bott cocycle between the lifts of rotations vanish. Therefore, when restricted to (the lifts of) rotations in $\uDiffSone$, the central extension splits and we can identify (the lifts of) rotations in the Virasoro group $\Vir$.
The group $\Vir$ admits localized subgroups $\Vir_I$, that are the inverse images of the simply connected subgroup $\Diff_+(I)$
(diffeomorphisms supported in $I \subset S^1$) of $\DiffSone$
with respect to the projection $\Vir \to \uDiffSone$.

\subsubsection{Positive-energy representations}
In the case of the Virasoro group $\Vir$, we are interested in representations (continuous in the strong operator topology)
such that the subgroup of (the lifts of) rotations is represented with positive generator, so-called \emph{positive-energy} representations.
Again any such representation $\tpi^\Vir$ is smooth in the sense of the infinite-dimensional Lie groups \cite[Theorem 2.18]{Zellner16}, \cite{NS15}.
One can thus differentiate the representation $\tpi^\Vir$ and pass to the Lie algebra representation $\rmd\tpi^\Vir$ of $\bbR\times_{\bfc} \Vect$, then restrict $\rmd\tpi^\Vir$ to $\Vect$ and complexify it to obtain a positive-energy representation of the Virasoro algebra, which is a dense subalgebra of the central extension of $\Vect_{\bbC}$ by $\bbC$.
Any such \emph{irreducible} positive-energy representation of the Virasoro algebra has been classified by $c$, the value the central element is represented by,
and $h$, the lowest eigenvector of the generator of (the lifts of) rotations, \cite{GKO86, FQS86, KR87, NS15}.

If $h = 0$, we denote by $\tpi_{\Vir,c}$ the so-called \textbf{vacuum representation} of $\Vir$. The \textbf{Virasoro net} with central charge $c$ \cite[Section 2.4]{Carpi04} is then defined by
\begin{align*}
 \cA_{\Vir,c}(I) := \{\tpi_{\Vir,c}(\gamma): \gamma \in \Vir_I\}'',\quad I\subset S^1,
\end{align*}
where $\Vir_I$ is the subgroup of $\Vir$ localized in $I$ as above.
Together with the (projective) representation $U$ of $\DiffSone$ given by $\tpi_{\Vir,c}$ and the lowest weight vector $\Omega$, again called \textbf{vacuum vector}, $(\cA_{\Vir,c}, U, \Omega)$ is a conformal net.

\subsection{General setting}\label{setting}
Let either $\Gamma = LG$ or $\Gamma = \uDiffSone$.
In both cases, $\Gamma$ is a topological group, and for each proper open interval $I\subset S^1$ there is a corresponding subgroup $\Gamma_I \subset \Gamma$ of elements (either smooth loops $S^1 \to G$ or orientation-preserving diffeomorphisms $S^1 \to S^1$) supported (``localized'') in $I$.

We shall need the following property of localized subgroups: If $I_1$ and $I_2$ are two intervals with disjoint closures both contained in the interval $I_3$, then
any element in $\Gamma_{I_3}$ whose support is contained in $I_1\cup I_2$ is a product of two elements supported respectively in $I_1$ and $I_2$.
We already know that two subgroups $\Gamma_{I_1}, \Gamma_{I_2}$ of $\Gamma$ commute element-wise if $I_1 \cap I_2 = \emptyset$. In the respective simply connected central extension $\tilde\Gamma = \tilde LG$ or $\tilde\Gamma = \Vir$ (by $\bbT$ for $LG$ and by $\bbR$ for $\uDiffSone$) the preimages $\tilde \Gamma_{I_1}, \tilde\Gamma_{I_2}$
of $\Gamma_{I_1}, \Gamma_{I_2} \subset \tilde \Gamma$ also commute element-wise. This follows because in both cases $\Gamma = LG, \uDiffSone$
the 2-cocycle of the corresponding Lie algebra is localized, that is, for two elements $\xi_1, \xi_2$ (respectively $f,g$) with disjoint supports,
$\omega(\xi_1, \xi_2) = 0$ (respectively $\bfc(f,g) = 0$).

Let $(\cA, U, \Omega)$ be a conformal net as in Section \ref{sec:conformalnet}.
We assume that there is a distinguished (true\footnote{For a unitary representation $\pi$ of a group $\Gamma$, the use of the world ``true'' means that $\pi(\gamma_1)\pi(\gamma_2)=\pi(\gamma_1\gamma_2)$ for every $\gamma_1,\gamma_2\in\Gamma$.}) strongly continuous unitary representation $\tpi_0$ of $\tilde \Gamma$ such that $\tpi_0(\tilde \Gamma_I) \subset \cA(I)$
(this $\tpi_0$ is not necessarily irreducible), and assume that $\tpi_0 (\phi(H)) \subset \bbT\bb1$, where $\phi, H$ (either $H = \bbT$ for $LG$ or $H = \bbR$ for $\overline{\DiffSone}$) are as in Section \ref{central}. Furthermore, we assume that the projective representation $U$ of $\Diff_+(S^1)$ giving the diffeomorphism covariance of $\cA$
\begin{enumerate}
 \item either coincides with $\tpi_0$ when $\Gamma = \uDiffSone$, when $U$ is extended to $\Vir$ (see Remark below),
 \item or makes $\tpi_0$ covariant when $\Gamma = LG$, namely\footnote{We think of $\DiffSone$ as the quotient of $\overline{\DiffSone}$ with the subgroup generated by (the lifts of) $2\pi$-rotation.} $U([\gamma])\tpi_0(\lambda)U([\gamma])^*=\tpi_0(\lambda\circ\gamma^{-1})$, for $\gamma \in \overline{\mathrm{Diff}_+(S^1)}$, $[\gamma]\in\DiffSone$, $\lambda\in\tilde LG$.
\end{enumerate}

\begin{remark}\label{rm:UinConformalNet}
Every positive-energy projective representation $U$ of $\DiffSone$ can be decomposed as direct sum of irreducible ones, and every such representation is unitarily equivalent to the projective representation $U_{(c,h)}$ for some fixed $c>0$ and varying $h\geq0$, obtained by integrating the corresponding Virasoro module \cite[Theorem 3.2.6]{WeinerThesis}, \cite[Proposition 2.2]{Carpi04}, cf.\ \cite[Theorem A.2]{Carpi04}. If we denote the corresponding representation Hilbert space as $\mathcal{H}_{(c,h)}$, each $U_{(c,h)}$ is indeed a global multiplier representation of $\uDiffSone$ on $\mathcal{H}_{(c,h)}$ defined through unitaries acting on $\mathcal{H}_{(c,h)}$ \cite[Proposition 5.1]{FH05}. Such representation is constructed thanks to the existence of a global section of the central extension of $\uDiffSone$, which one can think of as a certain fiber bundle over $\uDiffSone$. One has the same cocycle $e^{ic\bfB(\cdot, \cdot)}$ for all summands, where $\bfB$ is the Bott cocycle of $\DiffSone$ lifted to $\uDiffSone$, cf.\! Section \ref{diff}, \cite[Section 3.1.3]{FH05}. Thus the direct sum gives a (true) unitary representation of the central extension $\Vir = \bbR \times_{\bfB} \uDiffSone$ satisfying the condition that the $\bbR$-part is represented by a scalar. This gives the claimed extension of $U$ to $\Vir$, that we denote by $\tilde U$.
Moreover, by the assumption that $U$ is a projective representation of $\DiffSone$ (rather than $\uDiffSone$), it holds that $\tilde U(0, \tau_{\overline{2\pi}})$ is a scalar, where $\tau_{\overline{2\pi}} \in \uDiffSone$ is the lift of the $2\pi$-rotation.
Lastly, note that $\tilde U(\Vir_I) \subset \cA(I')' = \cA(I)$ by conformal covariance and Haag duality, where $\Vir_I$ is the localized subgroup of $\Vir$ in the interval $I\subset S^1$, see Section \ref{sec:conformalnet}.
\end{remark}

We take a DHR representation $\rho = \{\rho_I\}$ of the net $\cA$ on $\cH_\rho$ as in Section \ref{sec:DHR}.
As each local algebra $\cA(I)$ is a type III factor, the representation $\rho_I$ of $\cA(I)$ is a spatial isomorphism.
Therefore, $\rho_I$ defines a representation of $\tilde \Gamma_I$ by unitary operators on $\cH_\rho$
by the formula 
\begin{align}\label{eq:pilocal}
\tpi_{\rho,I}(\gamma) := \rho_I(\tpi_0(\gamma)),\quad\text{for every}\;\gamma\in\tilde\Gamma_I,
\end{align}
and $\tpi_{\rho,I}(\gamma) \in \bbT$ if and only if $\tpi_0(\gamma) \in \bbT$, since $\rho_I$ is a spatial isomorphism.
By DHR compatibility of the family $\rho$, see Section \ref{sec:DHR}, the unitary $\tpi_{\rho,I}(\gamma)$ does not depend on the chosen interval $I \subset S^1$ as long as $\supp \gamma \subset I$.

The condition $\tpi_0 (\phi(H)) \subset \bbT\bb1$, where $\phi, H$ are as in Section \ref{central}, implies that $\tpi_{\rho,I}(\phi(H)) \subset \bbT\bb1$. Indeed, by definition $\tilde\Gamma_I$ is the preimage of $\Gamma_I$ in $\tilde\Gamma$ and the image of $\phi$ coincides with the kernel of $\psi$ in the central extension $\tilde\Gamma$, thus $\tpi_{\rho,I}(\phi(H))$ is well-defined. Therefore, by taking a local trivialization of $\tilde\Gamma$ (if $\Gamma = LG$, see Section \ref{LG}, if $\Gamma = \uDiffSone$, it trivializes globally), from $\tpi_{\rho,I}$ we can define a local multiplier representation of $\Gamma_I$ that we call $\pi_{\rho, I}$. In Section \ref{grouprp}, we show that $\pi_{\rho, I}$ extends to a local multiplier representation of $\Gamma$. Note that the cocycle $c$ such that $\pi_{\rho, I}(\gamma_1)\pi_{\rho, I}(\gamma_2) = c(\gamma_1, \gamma_2)\pi_{\rho, I}(\gamma_1\gamma_2)$ does not depend on $I$, as it coincides with the cocycle of the local multiplier representation $\pi_0$ of $\Gamma$ defined by $\tpi_0$.

\paragraph{Examples.}
Any conformal net $\cA$ as in Section \ref{sec:conformalnet} satisfies the condition (1) above by considering $\Gamma = \uDiffSone$ in our construction.

If $\cA$ is an extension of the loop group net $\cA_{G,k}$, then
we can also take $\Gamma = LG$ and the vacuum representation $\tpi_0 = \tpi_{G,k}$ of $\tilde LG$ at level $k$ satisfies the condition (2). Also in this case, $U$ giving the diffeomorphism covariance of $\cA$ satisfies the above condition (1).

Further, note that in both the aforementioned cases, the DHR representation $\rho$ is arbitrary.

\section{Continuous fragmentations}\label{fragmentation}
For $\Gamma = LG, \uDiffSone$, we will construct a continuous fragmentation:
Let $\{I_j\}_{j=1,2,3}$ be a cover of the unit circle by non-empty open non-dense intervals such that every point of $S^1$ is in at most two intervals at the same time as in Fig.\ \ref{fig:intervals}.
\begin{figure}[ht]
\centering
\begin{tikzpicture}[line cap=round,line join=round,>=triangle 45,x=1.0cm,y=1.0cm]
\clip(-2.43,-0.61) rectangle (6.15,5.35);
\draw(1.4,2.42) circle (2cm);
\draw [shift={(1.4,2.42)}] plot[domain=1.57:3.78,variable=\t]({1*2.2*cos(\t r)+0*2.2*sin(\t r)},{0*2.2*cos(\t r)+1*2.2*sin(\t r)});
\draw [shift={(1.4,2.42)}] plot[domain=3.62:5.79,variable=\t]({1*2.39*cos(\t r)+0*2.39*sin(\t r)},{0*2.39*cos(\t r)+1*2.39*sin(\t r)});
\draw [shift={(1.4,2.42)}] plot[domain=-0.58:1.68,variable=\t]({1*2.65*cos(\t r)+0*2.65*sin(\t r)},{0*2.65*cos(\t r)+1*2.65*sin(\t r)});
\draw (1.07,5.16)-- (1.09,4.95);
\draw (3.53,1.05)-- (3.73,0.91);
\draw (3.41,1.37)-- (3.61,1.24);
\draw (1.41,4.73)-- (1.41,4.53);
\draw (-0.45,1.05)-- (-0.27,1.17);
\draw (-0.82,1.27)-- (-0.63,1.39);
\draw (3.69,4.73) node[anchor=north west] {$I_1$};
\draw (-1.17,4.25) node[anchor=north west] {$I_2$};
\draw (1.27,0.05) node[anchor=north west] {$I_3$};
\end{tikzpicture}
\caption{The covering $\{I_j\}_{j=1,2,3}$ of the unit circle.}
 \label{fig:intervals}
\end{figure}
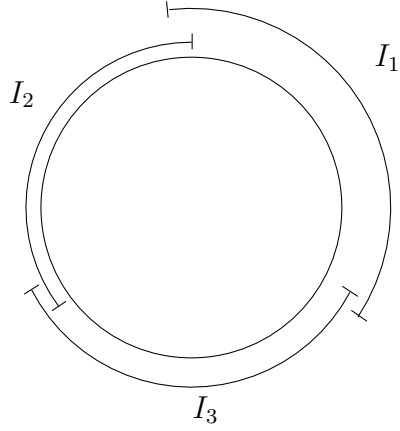
We find a neighbourhood $\cU$ of the unit element $e$ of $\Gamma$ and a triple of continuous maps $\Xi_j : \cU \to \Gamma, j=1,2,3$,
such that $\Xi_j(\gamma) \subset \Gamma_{I_j}$ and $\gamma = \Xi_1(\gamma)\Xi_2(\gamma)\Xi_3(\gamma)$.

\subsection{Continuous fragmentation of loop groups}

Recall that, for a proper open interval $I \subset S^1$, $LG_I$ is the subgroup of $LG$ of loops supported in $I$, where the support is defined by $\supp \gamma := \overline{\{x \in S^1: \gamma(x) \neq e_G\}}$ with $e_G$ the unit element of $G$.

\begin{lemma}\label{lm:fragmentation-LG}
 There is a neighbourhood $\cU$ of the unit element $e$ of $LG$ and continuous maps $\Xi_j: \cU \to LG$ such that
 \[
  \gamma = \Xi_1(\gamma)\Xi_2(\gamma)\Xi_3(\gamma)
 \]
for every $\gamma\in\cU$, $\Xi_j(\cU) \subset LG_{I_j}$
and $\Xi_j(e) = e$ for every $j = 1,2,3$. 
Furthermore, if $\gamma \in \cU$ has $\supp \gamma \subset I_1$, then $\supp \Xi_2(\gamma) \subset I_1 \cap I_2$ and $\supp \Xi_3(\gamma) \subset I_1 \cap I_3$ (but $\Xi_2(\gamma)$ and $\Xi_3(\gamma)$ need not be $e$).
\end{lemma}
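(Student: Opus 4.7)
The plan is to exploit the fact that $\exp_{LG} : \frU \to \cU$ is a local homeomorphism, so a sufficiently small neighborhood of $e \in LG$ can be parametrized by $\xi \in L\frg$ via $\gamma = \exp_{LG}(\xi)$. Fix once and for all a smooth partition of unity $\{\phi_1, \phi_2, \phi_3\}$ on $S^1$ subordinate to the open cover $\{I_j\}_{j=1,2,3}$, so $\phi_j \in C^\infty(S^1, \bbR)$, $\supp \phi_j \subset I_j$, and $\phi_1+\phi_2+\phi_3 \equiv 1$. For $\gamma \in \cU$, denote by $\xi := \exp_{LG}^{-1}(\gamma) \in L\frg$ its image under the local inverse, and define
\begin{align*}
\Xi_j(\gamma) := \exp_{LG}(\phi_j \cdot \xi), \qquad j=1,2,3.
\end{align*}
Each $\Xi_j$ is continuous as the composition of $\exp_{LG}^{-1}$, multiplication by the fixed smooth function $\phi_j$, and $\exp_{LG}$; one has $\Xi_j(e)=\exp_{LG}(0)=e$, and since $\supp(\phi_j \xi) \subset \supp \phi_j \subset I_j$ we get $\Xi_j(\cU) \subset LG_{I_j}$.

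The crux is to verify the factorization $\gamma = \Xi_1(\gamma)\Xi_2(\gamma)\Xi_3(\gamma)$ pointwise in $G$. Fix $x \in S^1$; by hypothesis on the cover, at most two of the scalars $\phi_j(x)$ are nonzero. If only $\phi_{j_0}(x) \neq 0$, then $\phi_{j_0}(x)=1$, the other two factors equal $e_G$ at $x$, and the product evaluates to $\exp_G(\xi(x)) = \gamma(x)$. If exactly two, say $\phi_1(x)$ and $\phi_2(x)$, are nonzero, then $\phi_1(x)+\phi_2(x)=1$, the third factor is again $e_G$, and since $\phi_1(x)\xi(x)$ and $\phi_2(x)\xi(x)$ are real multiples of the \emph{same} element $\xi(x)\in\frg$ they commute in $\frg$, so
\begin{align*}
\exp_G(\phi_1(x)\xi(x))\cdot\exp_G(\phi_2(x)\xi(x)) = \exp_G\bigl((\phi_1(x)+\phi_2(x))\xi(x)\bigr) = \gamma(x).
\end{align*}
Since this holds for every $x$ and loops multiply pointwise, the factorization identity holds globally in $LG$.

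For the additional support property, if $\supp\gamma\subset I_1$ then $\gamma(x)=e_G$ for every $x\notin I_1$; provided $\cU$ is chosen small enough that $\exp_G$ is injective on the range of $\xi$, this forces $\xi(x)=0$ off $I_1$, so $\supp\xi\subset I_1$; hence $\supp(\phi_j\xi)\subset I_1\cap I_j$ for $j=2,3$, and exponentiation preserves this support, yielding $\supp\Xi_j(\gamma)\subset I_1\cap I_j$. The only non-formal step in the whole proof is the pointwise commutation in the overlap region, which works precisely because the cover has no triple overlaps and because $LG$ consists of pointwise-multiplied functions, so no Baker--Campbell--Hausdorff correction ever appears. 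I expect the main obstacle not to arise here but in the subsequent diffeomorphism case, where composition is not pointwise and a direct analogue of this trick fails.
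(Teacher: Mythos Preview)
Your proof is correct and, in essence, coincides with the simplified form the paper itself records in the Remark immediately following the lemma: there the authors observe that, because $[\eta,\chi_j\eta]=0$ pointwise, their iterative construction collapses to $\Xi_1(\gamma)=\Exp(\chi_1\eta)$, $\Xi_2(\gamma)=\Exp(\chi_2(1-\chi_1)\eta)$, $\Xi_3(\gamma)=\Exp((1-\chi_1)(1-\chi_2)\eta)$, i.e.\ exactly your partition-of-unity formula with $\phi_1=\chi_1$, $\phi_2=\chi_2(1-\chi_1)$, $\phi_3=(1-\chi_1)(1-\chi_2)$.

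The difference is only in presentation. The paper's main proof is written as a step-by-step ``peel off a localized factor, then repeat on the remainder'' procedure (define $\Xi_1$, then $\Xi_2$ from $\Xi_1^{-1}\gamma$, then $\Xi_3$ as what is left), carefully tracking domains via nested neighbourhoods $V_1\supset\cdots\supset V_4$. This iterative scheme is deliberately parallel to what they need for $\DiffSone$, where composition is not pointwise and your direct commutation trick fails, exactly as you anticipate. Your argument cuts straight to the $LG$-specific simplification, which is cleaner here; a minor remark is that the ``no triple overlap'' hypothesis is not actually needed for the factorization identity, since $\phi_1(x)\xi(x),\phi_2(x)\xi(x),\phi_3(x)\xi(x)$ pairwise commute regardless, being scalar multiples of the same vector.
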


\begin{proof}
For the given covering $\{I_j\}_{j=1,2,3}$ of the circle, let $\chi_1$ be a smooth function supported in $I_1$
and such that $\chi_1(x) = 1$ for $x$ in a slightly larger interval than $S^1 \setminus I_2 \cup I_3=(I_1\setminus I_2)\cap (I_1\setminus I_3)$.
Let $\chi_2$ be a smooth function supported in $(I_1 \setminus I_3) \cup I_2$ such that $\chi_2(x) = 1$ for $x$ in a slightly larger interval than $I_2 \setminus I_3$.

Let $\mathcal{V}$ be a neighbourhood of $e$ in $LG$ homeomorphic via the exponential map $\Exp$ to the open neighbourhood $V:=\mathrm{Exp}(\mathcal{V})^{-1}$ of $0$
in $L\mathfrak{g}$ (see Section \ref{LG}).
Recall that the multiplication $m_{\chi_j}$ by a smooth scalar function $\chi_j$ is continuous as a map $L\frg \to L\frg$.
We define the following decreasing open neighbourhoods of $0$ in $L\frg$:
\begin{itemize}
 \item $V_1:=\{\xi\in V:\chi_1\xi\in V\}=V\cap m^{-1}_{\chi_1}(V)$. This is open and non-empty, as it contains $0$.
 \item $V_2:=\{\xi\in V_1:\mathrm{Exp}(\chi_1\xi)^{-1}\cdot\mathrm{Exp}(\xi)\in \mathcal{V}\} = V_1 \cap \{\xi\in V:\mathrm{Exp}(\chi_1\xi)^{-1}\cdot\mathrm{Exp}(\xi)\in \mathcal{V}\}$. This is the intersection of the open set $V_1$ with the pre-image of a continuous map obtained
 as composition of (continuous) group operations in $LG$,
 a local homeomorphism $\Exp$ from $L\frg$ to $LG$ and the scalar multiplication in $L\mathfrak{g}$.
 Again this is open and non-empty because $0 \in V_2$.
 \item $V_3:=\{\xi\in V_2: \chi_2\mathrm{Exp}^{-1}\left[\mathrm{Exp}(\chi_1\xi)^{-1}\cdot \mathrm{Exp}(\xi)\right]\in V\}$, which is open and non-empty
 by a similar argument.
 \item $V_4:=\{\xi\in V_3: \mathrm{Exp}\left(\chi_2\mathrm{Exp}^{-1}\left[\mathrm{Exp}(\chi_1\xi)^{-1}\cdot \mathrm{Exp}(\xi)\right]\right)^{-1}\cdot\mathrm{Exp}(\chi_1\xi)^{-1}\cdot \mathrm{Exp}(\xi)\in \mathcal{V}\}$, which is open and non-empty
 by a similar argument.
\end{itemize}

Let $U:= V_4 \subset V$ and set $\mathcal{U}:=\mathrm{Exp}(U) \subset \cV$. Then $\cU$ is an open neighbourhood of the unit element $e \in LG$.
If $\gamma\in\mathcal{U}$, then there exists $\eta\in U$ such that $\gamma=\mathrm{Exp}(\eta)$. Define
\begin{itemize}
 \item $\Xi_1(\gamma):= \mathrm{Exp}(\chi_1\eta)$
 \item $\Xi_2(\gamma):=\mathrm{Exp}(\chi_2\mathrm{Exp}^{-1}\left[\mathrm{Exp}(\chi_1\eta)^{-1}\cdot\mathrm{Exp}(\eta)\right])$
 \item $\Xi_3(\gamma):=\Xi_2(\gamma)^{-1}\cdot\Xi_1(\gamma)^{-1}\cdot \gamma  =\mathrm{Exp}(\chi_2\mathrm{Exp}^{-1}\left[\mathrm{Exp}(\chi_1\eta)^{-1}\cdot\mathrm{Exp}(\eta)\right])^{-1}\cdot\mathrm{Exp}(\chi_1\eta)^{-1}\cdot\mathrm{Exp}(\eta)$.
\end{itemize}
With these definitions,
\begin{itemize}
 \item For $x\in S^1$ such that $\chi_1(x)=1$, one has $\Xi_1(\gamma)=\gamma$. Moreover, $\supp \Xi_1(\gamma) \subset I_1$ and $\Xi_1(\gamma)\in\mathcal{V}$ since $\eta\in V_1$
 and $\chi_1 \eta$ is supported in $I_1$.
 \item As $\eta\in U\subset V_2$, $\mathrm{Exp}(\chi_1\eta)^{-1}\cdot\mathrm{Exp}(\eta)\in\mathcal{V}$ and it is supported in $I_2\cup I_3$
 because $\mathrm{Exp}(\chi_1\eta)$ and $\mathrm{Exp}(\eta)$ coincide on $\{x \in S^1: \chi_1(x) = 1\}$, which is slightly larger than $S^1 \setminus (I_2 \cup I_3)$.
 As $\eta\in U\subset  V_3$, $\Xi_2(\gamma)\in\mathcal{V}$ and its support is contained in $I_2$.
 \item For $\eta\in U = V_4$, $\Xi_3(\gamma)\in\mathcal{V}$ and its support is contained in $I_3$ as $\Xi_2(\gamma)$ coincides with $\Xi_1(\gamma)^{-1}\cdot\gamma$ on a slightly larger set than $S^1 \setminus I_3=(I_1\setminus I_3)\cup(I_2\setminus I_3)$. Indeed, this is immediate for $x\in I_2\setminus I_3$, since $\chi_2(x)=1$ for $x$ in a slightly larger set than $I_2\setminus I_3$. Consider now $x\in I_1\setminus I_3$.
 If $x\in (I_1 \setminus I_3)\cap I_2 \subset I_2 \setminus I_3$, then $\chi_2(x)=1$.
 If $x\in (I_1 \setminus I_3)\setminus I_2 = I_1 \setminus (I_2 \cup I_3)$, then $\Xi_1(\gamma)(x)=\gamma(x)$ for every $x\in I_1\setminus(I_2\cup I_3)$,
 and therefore $(\Xi_1(\gamma)^{-1}\gamma)(x) = e_G = \Exp(0_\frg) = \Exp(\chi_2(x)0_\frg) = \Xi_2(\gamma)(x)$.
\end{itemize}
Then, it is straightforward to show that $\gamma=\Xi_1(\gamma)\Xi_2(\gamma)\Xi_3(\gamma)$.
If $\supp \gamma \subset I_1$, then by construction $\supp(\Xi_1(\gamma)^{-1}\gamma) \subset I_1$ and then
$\Xi_2(\gamma) \subset I_2 \cap I_1$. Similarly, $\Xi_3(\gamma) \subset I_3 \cap I_1$.
\end{proof}

\begin{remark}
In the proof of Lemma \ref{lm:fragmentation-LG}, $\eta$ and $\chi_j\eta$ for $j=1,2,3$ commute, namely $[\eta,\chi_j\eta]=[\chi_j\eta,\chi_k\eta]=0$
for $j,k=1,2,3$ because the Lie bracket is pointwise and $\chi_j$ are scalars.
Therefore, one can write
\begin{itemize}
 \item $\Xi_1(\gamma)=\mathrm{Exp}(\chi_1\eta)$
 \item $\Xi_2(\gamma)=\mathrm{Exp}(\chi_2(1-\chi_1)\eta)$
 \item $\Xi_3(\gamma)=\mathrm{Exp}\left[(1-(\chi_1+\chi_2(1-\chi_1))\eta\right]=\mathrm{Exp}\left[(1-\chi_1)(1-\chi_2)\eta\right]$
\end{itemize}
and observe that $\Xi_3(\gamma)$ is supported in $I_3$ since $\chi_1\neq1$ and $\chi_2\neq1$ for a slightly smaller set than $I_2\cup I_3$ and $(I_2\setminus I_3)'=I_3\cup I_2'$ respectively. These conditions hold at the same time on a smaller set in $(I_2\cup I_3)\cap(I_3\cup I_2')=I_3$. 
\end{remark}

\begin{remark}
 $\Exp$ is a local homeomorphism between $L\mathfrak g$ and $LG$, but not
 between $\Vect$ and $\DiffSone$. Hence, the observations in the previous remark are no longer true for the diffeomorphism group.
 Nevertheless, we carry out a similar construction in Section \ref{sec:frag-diff}, but without using the exponential map $\Exp$. 
\end{remark}

\subsection{Continuous fragmentation of diffeomorphisms}\label{sec:frag-diff}
Recall that, for a proper open interval $I \subset S^1$, $\Diff_+(I)$ is the subgroup of diffeomorphisms $\gamma \in \DiffSone$
such that $\supp \gamma \subset I$, where $\supp \gamma := \overline{\{x \in S^1: \gamma(x) \neq x\}}$.
For our purpose, we need to adapt the results of \cite{CDIT21PositiveEnergy} (done for Sobolev difformorphisms of $S^1$)
to $\DiffSone$, cf.\ also \cite{DFK04}, \cite{Glo07}.
For the convenience of the reader, we detail the arguments.

Let $\{I_j\}_{j =1,2,3}$ be a cover of the unit circle as in Fig.\ \ref{fig:covering}, and write $I_j = (a_j, b_j)$ for $a_j, b_j\in S^1$.
We also take slightly smaller intervals $\hat I_j = (\hat a_j, \hat b_j) \subset I_j$
that still provide a cover of $S^1$. 
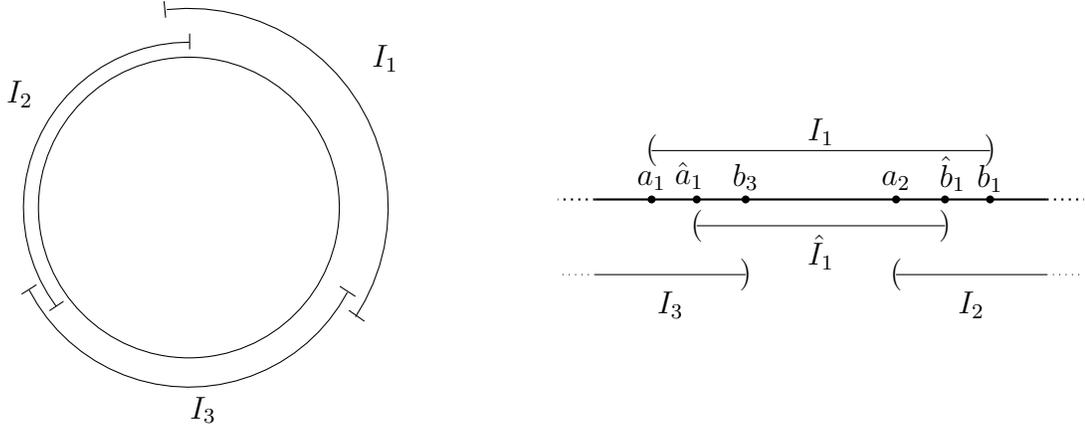
\begin{figure}[ht]
\centering
\begin{tikzpicture}[line cap=round,line join=round,>=triangle 45,x=1.0cm,y=1.0cm]
\clip(-2.43,-0.61) rectangle (6.15,5.35);
\draw(1.4,2.42) circle (2cm);
\draw [shift={(1.4,2.42)}] plot[domain=1.57:3.78,variable=\t]({1*2.2*cos(\t r)+0*2.2*sin(\t r)},{0*2.2*cos(\t r)+1*2.2*sin(\t r)});
\draw [shift={(1.4,2.42)}] plot[domain=3.62:5.79,variable=\t]({1*2.39*cos(\t r)+0*2.39*sin(\t r)},{0*2.39*cos(\t r)+1*2.39*sin(\t r)});
\draw [shift={(1.4,2.42)}] plot[domain=-0.58:1.68,variable=\t]({1*2.65*cos(\t r)+0*2.65*sin(\t r)},{0*2.65*cos(\t r)+1*2.65*sin(\t r)});
\draw (1.07,5.16)-- (1.09,4.95);
\draw (3.53,1.05)-- (3.73,0.91);
\draw (3.41,1.37)-- (3.61,1.24);
\draw (1.41,4.73)-- (1.41,4.53);
\draw (-0.45,1.05)-- (-0.27,1.17);
\draw (-0.82,1.27)-- (-0.63,1.39);
\draw (3.69,4.73) node[anchor=north west] {$I_1$};
\draw (-1.17,4.25) node[anchor=north west] {$I_2$};
\draw (1.27,0.05) node[anchor=north west] {$I_3$};
\end{tikzpicture}
\begin{tikzpicture}[path fading=north,scale=0.5]
         \draw [thick] (-6,0) --(6,0);
         \draw [thick,dotted] (-6,0) --(-7,0);
         \draw [thick,dotted] (6,0) --(7,0);
         \node at(0,1.8) {$I_1$};
         \draw [] (-3.3,-0.7) node{$($}--(3.3,-0.7)node{$)$};
         \node at(0,-1.4) {$\hat I_1$};
         \draw [] (-4.5,1.3) node{$($}--(4.5,1.3)node{$)$};
         \node at(-4.5,0.5) {$a_1$};
         \fill (-4.5,0) circle[radius=3pt];
         \node at(4.5,0.55) {$b_1$};
         \fill (4.5,0) circle[radius=3pt];
         \node at(-3.5,0.63) {$\hat a_1$};
         \fill (-3.3,0) circle[radius=3pt];
         \node at(3.5,0.68) {$\hat b_1$};
         \fill (3.3,0) circle[radius=3pt];
         \draw [dotted] (-6,-2) --(-7,-2);
         \node at(-4,-2.8) {$I_3$};
         \node at(-2,0.6) {$b_3$};
         \fill (-2,0) circle[radius=3pt];
         \draw [] (6,-2) --(2,-2)node{$($};
         \draw [] (-6,-2) --(-2,-2)node{$)$};
         \draw [dotted] (6,-2) --(7,-2);
         \node at(4,-2.8) {$I_2$};
         \node at(2,0.5) {$a_2$};
         \fill (2,0) circle[radius=3pt];
         \node at(0,-6) {};
\end{tikzpicture}
\caption{The covering of the unit circle and the subinterval of $I_1$.}
 \label{fig:covering}
\end{figure}

We now construct a fragmentation of the diffeomorphisms relative to the cover $\{I_j\}_{j =1,2,3}$, that is,
continuous maps $\{\Xi_j\}_{j=1,2,3}$ defined on a small neighbourhood $\cU$ of the unit element $e_{\DiffSone} \in \DiffSone$, such that $\Xi_j(\gamma) \in \Diff_+(I_j)$.
By continuity, the image of each $\Xi_j$ can be taken inside any other neighbourhood $\tilde \cU$ of $e$ by suitably restricting $\cU$. 
The precise statement is the following.

\begin{lemma}\label{lm:fragmentation-diff}
There is a neighbourhood $\cU$ of the unit element $e$ of $\DiffSone$ and continuous localizing maps
$\Xi_j: \cU \to$ $\Diff_+(I_j)$ such that 
\[
 \gamma = \Xi_1(\gamma)\Xi_2(\gamma)\Xi_3(\gamma)
\]
for every $\gamma\in\cU$, and such that $\Xi_j(e) = e$ and $\supp \Xi_j(\gamma) \subset I_j$ for every $j = 1,2,3$.

If $\gamma \in \cU$ has $\supp \gamma \subset I_1$, then
$\supp \Xi_2(\gamma) \subset I_1 \cap I_2$ and $\supp \Xi_3(\gamma) \subset I_1 \cap I_3$
(but $\Xi_2(\gamma)$ and $\Xi_3(\gamma)$ need not be $e$).
If $\supp \gamma \cap (a_2, b_1) = \emptyset$, then $\Xi_1(\gamma)(\theta) = \theta$ for $\theta \in (a_2, b_1)$.
\end{lemma}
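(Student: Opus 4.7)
The plan is to mimic the loop-group construction of Lemma \ref{lm:fragmentation-LG}, replacing the exponential map (which is not a local homeomorphism on $\Vect\to\DiffSone$) by the affine ``displacement'' $\gamma - e$, which is available because $\uDiffSone$ is an open subset of the Fr\'echet space $\{f\in C^\infty(\bbR,\bbR): f(t+2\pi)=f(t)+2\pi\}$ with chart $\gamma\mapsto\gamma-e$ around $e$. Lifting $\gamma\in\DiffSone$ near $e$ to $\uDiffSone$, the small function $\gamma-e$ plays the role that $\chi_j\eta$ played in the loop-group proof.

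Concretely, choose smooth functions $\chi_1,\chi_2\in C^\infty(S^1,[0,1])$ with $\supp\chi_1\subset I_1$ and $\chi_1\equiv 1$ on a neighbourhood of $S^1\setminus(I_2\cup I_3)$, and with $\supp\chi_2\subset(I_1\setminus I_3)\cup I_2$ and $\chi_2\equiv 1$ on a neighbourhood of $I_2\setminus I_3$. For $\gamma$ in a sufficiently small neighbourhood $\cU$ of $e$ (measured in the $C^1$-seminorm) set
\begin{align*}
\Xi_1(\gamma)(\theta) &:= \theta + \chi_1(\theta)(\gamma(\theta)-\theta),\\
\gamma_1 &:= \Xi_1(\gamma)^{-1}\gamma,\\
\Xi_2(\gamma)(\theta) &:= \theta + \chi_2(\theta)(\gamma_1(\theta)-\theta),\\
\Xi_3(\gamma) &:= \Xi_2(\gamma)^{-1}\Xi_1(\gamma)^{-1}\gamma,
\end{align*}
so that $\gamma = \Xi_1(\gamma)\Xi_2(\gamma)\Xi_3(\gamma)$ by construction and $\Xi_j(e)=e$. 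Continuity of each $\Xi_j$ follows from the continuity of pointwise multiplication by the fixed $\chi_j$, of composition, and of inversion in $\uDiffSone$ in the $C^\infty$-topology; this mirrors the nested-open-set argument of Lemma \ref{lm:fragmentation-LG}, and $\cU$ is defined as the intersection of the pre-images under which each of the four formulas above makes sense and stays in a fixed working neighbourhood of $e$.

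For the support claims: $\Xi_1(\gamma)\equiv e$ off $\supp\chi_1\subset I_1$, so $\Xi_1(\gamma)\in\Diff_+(I_1)$ and $\gamma_1$ is identity where $\chi_1=1$, hence $\supp(\gamma_1-e)\subset I_2\cup I_3$; then $\Xi_2(\gamma)\equiv e$ off $\supp\chi_2\cap(I_2\cup I_3)\subset((I_1\setminus I_3)\cup I_2)\cap(I_2\cup I_3)=I_2$. For $\Xi_3(\gamma)=\Xi_2(\gamma)^{-1}\gamma_1$: outside $I_2\cup I_3$ both factors are identity, and wherever $\chi_2$ equals $1$ on a neighbourhood we have $\Xi_2(\gamma)(\theta)=\gamma_1(\theta)$, so $\Xi_3(\gamma)(\theta)=\theta$; combining these two observations places $\supp\Xi_3(\gamma)\subset (I_2\cup I_3)\setminus(I_2\setminus I_3)=I_3$. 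The additional properties follow easily: if $\supp\gamma\subset I_1$, then $\Xi_1(\gamma)$ is identity off $I_1$, forcing $\gamma_1$, hence $\Xi_2(\gamma)$ and $\Xi_3(\gamma)$, to be identity off $I_1$; and if $\gamma$ fixes $(a_2,b_1)=I_1\cap I_2$, the formula for $\Xi_1(\gamma)$ directly gives $\Xi_1(\gamma)(\theta)=\theta$ there.

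The main obstacle is verifying that $\Xi_1(\gamma)$ and $\Xi_2(\gamma)$ actually lie in $\uDiffSone$, i.e.\ have strictly positive derivative. For $\Xi_1(\gamma)$ one computes
\begin{align*}
\Xi_1(\gamma)'(\theta) = 1 + \chi_1'(\theta)(\gamma(\theta)-\theta)+\chi_1(\theta)(\gamma'(\theta)-1),
\end{align*}
which is positive once $\|\gamma-e\|_{C^0}$ and $\|\gamma'-1\|_{C^0}$ are small relative to $\|\chi_1\|_{C^1}$; the analogous bound for $\Xi_2$ determines the final choice of $\cU$, noting that $\gamma\mapsto\gamma_1$ is continuous at $e$ so $\gamma_1$ stays in any prescribed $C^1$-neighbourhood provided $\gamma$ does. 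This yields the required open neighbourhood $\cU\subset\DiffSone$ of $e$, completing the construction. Finally, although the formulas are written on $\uDiffSone$, they descend to $\DiffSone$ because for $\gamma$ near $e$ the choice of lift is canonical, so the $\Xi_j$ project to continuous maps $\cU\to\Diff_+(I_j)\subset\DiffSone$ as required.
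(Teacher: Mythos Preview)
Your proof is correct and takes a genuinely different, more elementary route than the paper. The paper constructs $\Xi_1(\gamma)$ by prescribing its \emph{derivative} via
\[
\gamma_1'(\theta)=(\gamma'(\theta)-1)D_{\mathrm{c},1}(\theta)+1+\alpha_1(\gamma)D_{\mathrm{l},1}(\theta)+\beta_1(\gamma)D_{\mathrm{r},1}(\theta),
\]
which forces the introduction of the correction constants $\alpha_1,\beta_1$ (and auxiliary bumps $D_{\mathrm{l},1},D_{\mathrm{r},1}$) to ensure both $2\pi$-periodicity of $\gamma_1$ and the matching $\gamma_1=\gamma$ on $\hat I_1$. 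By contrast, you work directly on the displacement $\gamma-e$, setting $\Xi_1(\gamma)=e+\chi_1(\gamma-e)$; periodicity and the matching on $\{\chi_1=1\}$ are then automatic, so no correction terms are needed, and positivity of the derivative reduces to a single transparent $C^1$-smallness condition. The support analysis for $\Xi_2,\Xi_3$ and the two final properties are likewise shorter in your version (the last claim is a one-line substitution, whereas the paper has to unwind its integral formula and the vanishing of $\beta_1$). The paper's construction is inherited from an argument for Sobolev diffeomorphisms \cite{CDIT21PositiveEnergy}, which may explain its more elaborate shape; in the smooth setting your approach is the cleaner one, and it also makes the parallel with the loop-group lemma more visible.

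One cosmetic point: when you verify $\supp\Xi_2(\gamma)\subset I_2$, it is worth noting explicitly that the endpoint $a_2$ of $I_2$ lies in $S^1\setminus(I_2\cup I_3)$, hence in $\{\chi_1=1\}$, so $\gamma_1$ is the identity near $a_2$; together with $b_2\notin\supp\chi_2$ this ensures the (closed) support of $\Xi_2(\gamma)$ is compactly contained in the open interval $I_2$. This is implicit in your set computation $[(I_1\setminus I_3)\cup I_2]\cap(I_2\cup I_3)=I_2$, but spelling it out removes any doubt about closures.
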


\begin{proof}
The statement of continuous fragmentation is concerned with a neighbourhood of the unit element $e$,
hence we may work on the universal covering group $\uDiffSone$ and its subgroups $\overline{\Diff_+(I_j)}=\Diff_+(I_j)$ since $\Diff_+(I_j)$ is simply connected.
Accordingly, we consider the realization of $\uDiffSone$ as a subgroup of $C^\infty(\bbR, \bbR)$ (see Section \ref{diff}):
By identifying $S^1$ with $\bbR/2\pi\bbZ$, any element $\gamma \in \overline{\DiffSone}$ can be identified with a smooth function $\gamma : \bbR \to \bbR$
satisfying $\gamma'(t) > 0$ and $\gamma(t + 2\pi) = \gamma(t) + 2\pi$,
where
intervals in $S^1$ are identified with their inverse image in $\bbR$, e.g., under the covering map $t\mapsto e^{it}, \bbR \to S^1$ sending $[-\pi,\pi)$ onto $S^1$.
The unit element $e$ (for simplicity, instead of $e_{\uDiffSone}$) $\uDiffSone$ is the identity function $e(t) = t$ for every $t\in\bbR$.

Moreover, we may assume without loss of generality that, under the covering map $\bbR \to S^1$, the extreme points of $I_j$ for $j=1,2$ in $S^1$ are such that $0 < a_1 < b_1 < 2\pi$ and $0 < a_2 < b_2 < 2\pi$ modulo $2\pi$. By abuse of notation, $a_j$, $b_j$, $\hat a_j$ $\hat b_j$  will also denote points in $\bbR$. Hence, in our convention, i.e., we identified $0\in\mathbb{R}$ with a point in $I_3\setminus (I_1\cup I_2)$ and our orientation on $S^1$ is anti-clockwise, we have $0 < a_1 < \hat a_1 < \hat b_3 < b_3 < a_2 < \hat a_2 < \hat b_1 < b_1 < a_3 < \hat a_3 < \hat b_2 < b_2< 2\pi$, see Fig.\ \ref{fig:covering}.

Let us take a smooth $2\pi$-periodic function $D_{\mathrm{c},1}$ (where ``$\mathrm{c}$'' stands for center) with $D_{\mathrm{c},1}(t)=1$ for $t\in [\hat a_1, \hat b_1]$, with support in $(a_1,b_1)$, and $0 \le D_{\mathrm{c},1}(t) \le 1$.
Let $0 \le D_{\mathrm{l},1}(t) \le 1$ be another smooth $2\pi$-periodic function (where ``$\mathrm{l}$'' stands for left) with support in
$(a_1,\hat a_1)$
and $\int_0^{2\pi}D_{\mathrm{l},1}(t)dt = \int_{a_1}^{\hat a_1}D_{\mathrm{l},1}(t)dt= \frac12(\hat a_1-a_1)$,
which is possible because the interval $(a_1,\hat a_1)$ is longer than $\frac12(\hat a_1 - a_1)$.
Similarly, let $0 \le D_{\mathrm{r},1}(t) \le 1$ be a smooth $2\pi$-periodic function (where ``$\mathrm{r}$'' stands for right)
with support in $(\hat b_1,b_1)$ and $\int_0^{2\pi}D_{\mathrm{r},1}(t)dt=\int_{\hat b_1}^{b_1}D_{\mathrm{r},1}(t)dt=\frac12(b_1 - \hat b_1)$.

For $\epsilon > 0$, we consider the following neighbourhood of $e(t) = t$ in $\overline\DiffSone$
\[
 \cU_{\epsilon} := \left\{\gamma \in \overline{\DiffSone} : |\gamma(t)-e(t)|<\epsilon, |\gamma^{\prime}(t)-1|<\epsilon
 \;\text{ for }t\in[0,2\pi]\right\}.
\]
Note that $\cU_\epsilon$ is open in the uniform convergence of all the derivatives topology.

Let $\gamma \in \cU_{\epsilon}$
and define the constant $\alpha_1(\gamma)$ by
\begin{align}\label{eq:alpha}
 \alpha_1(\gamma) := \frac2{\hat a_1 - a_1}\left(\gamma(\hat a_1)-\hat a_1 - \int_0^{\hat a_1} (\gamma^{\prime}(t)-1)D_{\mathrm{c},1}(t)dt\right).
\end{align}
It follows that 
\begin{equation}\label{estalpha}
|\alpha_1(\gamma)|\leq \frac {2}{\hat a_1 - a_1} \epsilon (1+\hat{a}_1)
\end{equation} by definition of $\cU_{\epsilon}$, and
\[
 \gamma(\hat a_1)=\int_0^{\hat a_1} ((\gamma^{\prime}(t)-1)D_{\mathrm{c},1}(t)+1+\alpha_1(\gamma)D_{\mathrm{l},1}(t))dt. 
\]
Similarly, define the constant $\beta_1(\gamma)$ by
\begin{align}\label{eq:beta}
 \beta_1(\gamma) &:= \frac2{b_1 - \hat b_1}\left(\hat b_1 - \gamma(\hat b_1) - \int_{\hat b_1}^{b_1} (\gamma^{\prime}(t)-1)D_{\mathrm{c},1} (t) dt\right) \\
 &\,= \frac{-2\;\;\;}{b_1 - \hat b_1}\left(\int_0^{2\pi} ((\gamma^{\prime}(t)-1)D_{\mathrm{c},1} (t)+\alpha_1(\gamma)D_{\mathrm{l},1}(t))dt\right), \nonumber
\end{align}
where the equality follows by using the definition of $\alpha_1(\gamma)$ and the fact that $D_{\mathrm{c},1}(t) = 1$ on $[\hat a_1, \hat b_1]$.
It follows that 
\begin{align}\label{estbeta}
|\beta_1(\gamma)|\leq \frac {2}{b_1 - \hat b_1} \epsilon (1 + b_1 - \hat{b}_1)
\end{align}
 and
\begin{align*}
 b_1 = \int_0^{b_1} ((\gamma^{\prime}(t)-1)D_{\mathrm{c},1} (t)+1+\alpha_1(\gamma)D_{\mathrm{l},1}(t)+\beta_1(\gamma)D_{\mathrm{r},1}(t))dt,
\end{align*}
where all integrals cancel but $\int_0^{b_1} 1 dt = b_1$. Now, the function
\begin{align}\label{eq:gamma1}
 \gamma_1(\theta):=\int_0^\theta((\gamma^\prime (t)-1)D_{\mathrm{c},1}(t)+1+\alpha_1(\gamma)D_{\mathrm{l},1}(t)+\beta_1(\gamma)D_{\mathrm{r},1}(t))dt\,,
\end{align}
for $\theta\in\bbR$, satisfies $\gamma_1(\theta + 2\pi) = \gamma_1(\theta) + 2\pi$. Indeed, one can rewrite $\gamma_1(\theta+2\pi)$ as $\gamma_1(\theta)+\gamma_1(2\pi)$, which gives the desired results observing that $\gamma_1(2\pi)=2\pi$ and all the functions appearing, i.e., $\gamma'$, $D_{\mathrm{c},1}$, $D_{\mathrm{l},1}$, and $D_{\mathrm{r},1}$, are $2\pi$-periodic. 

The first derivative of $\gamma_1$
\begin{align*}
 \gamma'_1(\theta)= (\gamma^\prime (\theta)-1)D_{\mathrm{c},1}(\theta)+1+\alpha_1(\gamma)D_{\mathrm{l},1}(\theta)+\beta_1(\gamma)D_{\mathrm{r},1}(\theta)
\end{align*}
is positive if $\epsilon<\epsilon_1$ for a fixed $\epsilon_1$ estimated using \eqref{estalpha} and \eqref{estbeta}.
Therefore, for $\gamma\in\mathcal{U}_{\epsilon}$ for $\epsilon<\epsilon_1$, $\gamma_1$ can be regarded as an element of $\overline{\DiffSone}$.
It is also easy to check that $\gamma_1$ has the desired properties,
namely $\gamma_1(\theta)=\gamma(\theta)$ for $\theta\in [\hat{a}_1,\hat{b}_1]$ and $\gamma_1(\theta)=\theta$ for $\theta\in [0,a_1)\cup(b_1,2\pi]$,
hence $\supp \gamma_1 \subset I_1$, i.e., $\gamma_1\in \Diff_+(I_1)$.
Note that, for $\epsilon<\epsilon_1$, the assignment $\cU_{\epsilon}\rightarrow \Diff_+(I_1)$, $\gamma\mapsto\gamma_1$ is continuous
by \eqref{eq:gamma1}, \eqref{eq:alpha}, \eqref{eq:beta} in the uniform convergence of all the derivatives topology. Therefore, from now on we will assume $\epsilon<\epsilon_1$.

For $\gamma\in\cU_{\epsilon}$ we set $\Xi_1(\gamma) := \gamma_1$.
Next, we construct $\Xi_2(\gamma)$. By the continuity of $\Xi_1$ and the inverse in $ \overline{\DiffSone}$, the set $\mathcal{U}_{1,\epsilon}:=\{\gamma\in \mathcal{U}_{\epsilon}: \gamma_1^{-1}\gamma\in \mathcal{U}_{\epsilon}\}$ is a non-empty open neighbourhood of the identity in  $\overline{\DiffSone}$. Thus, for $\gamma\in\mathcal{U}_{1,\epsilon}$, we define $\gamma_2$ similarly to \eqref{eq:gamma1}, with $\gamma_1^{-1}\gamma$ instead of $\gamma$ and with $I_2$ instead of $I_1$, as follows
\begin{align}\label{eq:gamma2}
 \gamma_2(\theta):=\int_0^\theta\left[\left((\gamma_1^{-1}\gamma)^\prime (t)-1\right)D_{\mathrm{c},2}(t)+1+\alpha_2(\gamma_1^{-1}\gamma)D_{\mathrm{l},2}(t)+\beta_2(\gamma_1^{-1}\gamma)D_{\mathrm{r},2}(t)\right]dt\,,
\end{align}
for $\theta\in\bbR$, where $D_{\mathrm{c},2}$, $D_{\mathrm{l},2}$, $D_{\mathrm{r},2}, \alpha_2, \beta_2$ are defined as before but on $I_2$ instead of $I_1$
and $(\gamma_1^{-1}\gamma)(\theta)$ instead of $\gamma$. In particular, we have
\begin{align}\label{eq:alpha2}
 \alpha_2(\gamma_1^{-1}\gamma) := \frac2{\hat a_2 - a_2}
 \left((\gamma_1^{-1}\gamma)(\hat a_2)-\hat a_2 - \int_0^{\hat a_2} ((\gamma_1^{-1}\gamma)^{\prime}(t)-1)D_{\mathrm{c},2}(t)dt\right) =0
\end{align}
because, recalling that $\gamma_1(\theta)=\gamma(\theta)$ on $[\hat a_1,\hat b_1]\supset [a_2,\hat a_2]$, we have $(\gamma_1^{-1}\gamma)(\theta) = \theta$ on the support $[a_2, \hat a_2]$ of $D_{\mathrm{c},2}$
and $(\gamma_1^{-1}\gamma)(\hat a_2) = \hat a_2$.

Thus, $\gamma_2$ belongs to $\overline{\DiffSone}$, for a sufficiently small $\epsilon$. As before one has $\gamma_2(\theta) = (\gamma_1^{-1}\gamma)(\theta)$ for $\theta \in [\hat{a}_2,\hat{b}_2]$, and $\gamma_2(\theta)=\theta$ for $\theta\in[0,a_2)\cup(b_2,2\pi]$, thus $\supp \gamma_2 \subset I_2$.
In this way, we define the continuous map $\Xi_2(\gamma) := \gamma_2$.

We claim that $(\gamma_2^{-1}\gamma_1^{-1}\gamma)(\theta) = \theta$ for $\theta \in [\hat{a}_1,\hat{b}_1] \cup [\hat{a}_2,\hat{b}_2]$. Since we already know that $\gamma_2(\theta)=\gamma_1^{-1}\gamma(\theta)$ for $\theta\in[\hat{a}_2,\hat{b}_2]$,
we only need to check the claimed equality for $\theta \in [\hat a_1, \hat b_1]$.
There it holds that $(\gamma_1^{-1}\gamma)(\theta) = \theta$, therefore, we need to prove that $\gamma_2(\theta) = \theta$ on $[\hat a_1, \hat b_1]$.
To do so, observe that $[\hat a_1, \hat b_1]$ is the union of $[\hat a_1,\hat a_2]$ and $[\hat a_2, \hat b_1]$ (see Figure \ref{fig:overlap}).
One can prove the desired equality on each of these intervals using the definition \eqref{eq:gamma2} of $\gamma_2$, and using that $\gamma_1^{-1}\gamma(\theta)=\theta$ (thus $(\gamma_1^{-1}\gamma)'(\theta) = 1$) for $\theta\in[\hat{a}_1,\hat{b}_1]$.
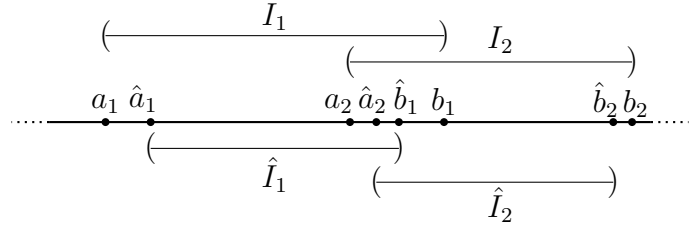
\begin{figure}[ht]
\centering
\begin{tikzpicture}[path fading=north,scale=0.5]
         \draw [thick] (-6,0) --(10,0);
         \draw [thick,dotted] (-6,0) --(-7,0);
         \draw [thick,dotted] (10,0) --(11,0);
         \node at(0,2.8) {$I_1$};
         \draw [] (-3.3,-0.7) node{$($}--(3.3,-0.7)node{$)$};
         \node at(0,-1.4) {$\hat I_1$};
         \draw [] (-4.5,2.3) node{$($}--(4.5,2.3)node{$)$};
         \node at(-4.5,0.5) {$a_1$};
         \fill (-4.5,0) circle[radius=3pt];
         \node at(4.5,0.55) {$b_1$};
         \fill (4.5,0) circle[radius=3pt];
         \node at(-3.5,0.63) {$\hat a_1$};
         \fill (-3.3,0) circle[radius=3pt];
         \node at(3.5,0.68) {$\hat b_1$};
         \fill (3.3,0) circle[radius=3pt];
         \draw [] (2,1.6)node{$($} --(9.5,1.6)node{$)$};
         \node at(6,2.2) {$I_2$};
         \node at(9.6,0.5) {$b_2$};
         \fill (9.5,0) circle[radius=3pt];
         \node at(1.7,0.5) {$a_2$};
         \fill (2,0) circle[radius=3pt];
         \draw [] (2.7,-1.6)node{$($} --(9,-1.6)node{$)$};
         \node at(6,-2.2) {$\hat I_2$};
         \node at(2.6,0.6) {$\hat a_2$};
         \fill (2.7,0) circle[radius=3pt];
         \node at(8.8,0.6) {$\hat b_2$};
         \fill (9,0) circle[radius=3pt];
\end{tikzpicture}
\caption{The overlap of $I_1$ and $I_2$.}
 \label{fig:overlap}
\end{figure}

Since $\{\hat I_j\}_{j=1,2,3}$ is a cover of $S^1$, we have $(\hat I_1 \cup \hat I_2)' \subset \hat I_3$.
Therefore, if we set $\Xi_3(\gamma) := \gamma_2^{-1}\gamma_1^{-1}\gamma$, it is supported in $\hat I_3 \subset I_3$
and the map $\Xi_3$ is continuous because it is a composition of continuous maps. Moreover, $\gamma = \Xi_1(\gamma)\Xi_2(\gamma)\Xi_3(\gamma)$.

To show the last statements, let $\gamma \in \cU$ with $\supp \gamma \subset I_1$. Then, arguing as above, $\supp \Xi_2(\gamma) \subset (\hat b_1,b_1) \subset I_1 \cap I_2$, and $\supp \Xi_3(\gamma) \subset (a_1,\hat a_1)  \subset I_1 \cap I_3$.
Next, assume that $\supp \gamma \cap (a_2, b_1) = \emptyset$. In this case, it is straightforward that $\beta_1(\gamma) = 1$
and $\gamma_1(\theta) = \theta$ for $\theta \in (a_2, b_1)$ by \eqref{eq:gamma1} (by noting that $\gamma'(\theta) = 1$ for $\theta \in (a_2, b_1)$),
concluding the proof.
\end{proof}

\section{From DHR representations to local multiplier representations}\label{grouprp}
Let $(\cA, U, \Omega)$ be a conformal net, $\Gamma$ an infinite-dimensional Lie group, either $\Gamma = LG$ or $\Gamma = \overline \DiffSone$, $\tilde\Gamma$ its universal central extension,
$\tpi_0$ a (true) unitary representation of $\tilde\Gamma$, and $\rho$ a DHR representation of $\cA$ as in Section \ref{setting}.
Let $\cU \subset \Gamma$ be a neighbourhood of the identity element with continuous localizing maps as in Lemma \ref{lm:fragmentation-LG} or Lemma \ref{lm:fragmentation-diff}.
By taking a smaller neighbourhood if necessary (and call it again $\cU$), we may assume that
$\cU$ trivializes $\tilde\Gamma$, and we take a local continuous section $s : \cU \to \tilde\Gamma$,
where $\psi\circ s = \id$ on $\cU$ and $\psi: \tilde\Gamma \to \Gamma$ is the projection in the central extension.
The map $\pi_0 := \tpi_0\circ s$ can be seen as a local multiplier representation of $\Gamma$ with $\bbT$-valued  cocycle $c$, satisfying $\pi_0(\gamma)\pi_0(\gamma') = c(\gamma, \gamma')\pi_0(\gamma\gamma')$ for $\gamma, \gamma' \in \cU$, cf. Section 2.1.

For simplicity, we write $\gamma_j := \Xi_j(\gamma)$, for $\gamma\in\cU$ and $j=1,2,3$, where the neighbourhood $\cU$ of identity element $e\in\Gamma$ and the localizing maps $\Xi_j$ are as in Lemma \ref{lm:fragmentation-LG} or Lemma \ref{lm:fragmentation-diff}, subordinated to the cover $\{I_j\}_{j=1,2,3}$ of $S^1$.
Recall the unitary representation $\tpi_{\rho, I}$ of $\tilde \Gamma_I$ for each proper open interval $I \subset S^1$ defined in \eqref{eq:pilocal} from a given DHR representation $\rho$ of $\cA$.
This gives a local multiplier representation $\pi_{\rho, I}$ of $\Gamma_I$, such that $\pi_{\rho, I}=\tpi_{\rho, I}\circ s=\rho_I\circ\tpi_0\circ s=\rho_I\circ\pi_0$, using Section 2.1 and \eqref{eq:pilocal}.

We define a map $\pi_\rho$ on $\cU$ by
\begin{align}\label{eq:pidefonU}
 \pi_\rho(\gamma) := \pi_{\rho, I_1}(\gamma_1)\pi_{\rho, I_2}(\gamma_2)\pi_{\rho, I_3}(\gamma_3) c(\gamma_1,\gamma_2)^{-1}c(\gamma_1\gamma_2,\gamma_3)^{-1},
\end{align}
where we recall that $c$ does not depend on $I$, see Section \ref{setting}.
We need a preliminary lemma.
\begin{lemma}\label{lm:picompatible}
If $\gamma \in \cU$ has $\supp \gamma \subset I_1$, then $\pi_\rho(\gamma) = \pi_{\rho, I_1}(\gamma)$. Moreover, if $\supp \gamma \subset I_1\cap (I_2 \cup I_3)$, then $\pi_\rho(\gamma) = \pi_{\rho, I_1}(\gamma) = \pi_{\rho, I_2\cup I_3}(\gamma)$.
\end{lemma}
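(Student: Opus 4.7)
My approach rests on three ingredients already available: the refined support statement at the end of Lemma \ref{lm:fragmentation-LG}/\ref{lm:fragmentation-diff}, the DHR compatibility of the family $\rho$, and the fact that $\pi_{\rho,I_1}=\rho_{I_1}\circ\pi_0$ is a local multiplier representation of $\Gamma_{I_1}\cap\cU$ with the same $\bbT$-valued cocycle $c$ as $\pi_0$ (because $\rho_{I_1}$ is a normal $\ast$-homomorphism, it fixes scalars).

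First, I would invoke the last part of the fragmentation lemma: for $\gamma\in\cU$ with $\supp\gamma\subset I_1$, not only is $\gamma_1\in\Gamma_{I_1}$, but also $\supp\gamma_2\subset I_1\cap I_2\subset I_1$ and $\supp\gamma_3\subset I_1\cap I_3\subset I_1$. Thus all three pieces are supported in $I_1$. By the DHR compatibility noted after \eqref{eq:pilocal} (independence of $\tpi_{\rho,J}(\gamma_j)$ from the choice of $J\supset\supp\gamma_j$), this yields $\pi_{\rho,I_j}(\gamma_j)=\pi_{\rho,I_1}(\gamma_j)$ for $j=2,3$, so in \eqref{eq:pidefonU} all three factors may be rewritten with the common representation $\pi_{\rho,I_1}$.

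Next, after shrinking $\cU$ if necessary so that $\Xi_j(\cU)\subset\cU$ and $\Xi_1(\gamma)\Xi_2(\gamma)\in\cU$ (possible by continuity of the $\Xi_j$ and of multiplication in $\Gamma$), the multiplier relation for $\pi_{\rho,I_1}$ applies to each consecutive product and gives
\[
\pi_{\rho,I_1}(\gamma_1)\pi_{\rho,I_1}(\gamma_2)\pi_{\rho,I_1}(\gamma_3)
= c(\gamma_1,\gamma_2)\,c(\gamma_1\gamma_2,\gamma_3)\,\pi_{\rho,I_1}(\gamma_1\gamma_2\gamma_3)
= c(\gamma_1,\gamma_2)\,c(\gamma_1\gamma_2,\gamma_3)\,\pi_{\rho,I_1}(\gamma),
\]
using $\gamma=\gamma_1\gamma_2\gamma_3$ from the fragmentation lemma. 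Plugging this into \eqref{eq:pidefonU}, the two cocycle factors cancel the inverse cocycles there, yielding $\pi_\rho(\gamma)=\pi_{\rho,I_1}(\gamma)$.

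For the moreover part, assume in addition $\supp\gamma\subset I_1\cap(I_2\cup I_3)$. The identity $\pi_\rho(\gamma)=\pi_{\rho,I_1}(\gamma)$ follows from the first part. Since by the choice of cover (Fig.\ \ref{fig:intervals}) $I_2\cup I_3$ is itself a proper open interval of $S^1$ containing $\supp\gamma$, DHR compatibility applied to the two intervals $I_1$ and $I_2\cup I_3$ gives the remaining equality $\pi_{\rho,I_1}(\gamma)=\pi_{\rho,I_2\cup I_3}(\gamma)$. The only technical point I anticipate needing care with is the neighbourhood bookkeeping ensuring that the products on which the multiplier relation is invoked all lie in $\cU$; this is routine given the continuity of the fragmentation maps.
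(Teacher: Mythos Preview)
Your argument for the first assertion is identical to the paper's: invoke the refined support control from the fragmentation lemma, use DHR compatibility to replace $\pi_{\rho,I_2},\pi_{\rho,I_3}$ by $\pi_{\rho,I_1}$, and collapse the product via the multiplier relation for $\pi_{\rho,I_1}$.

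For the ``moreover'' part your route differs slightly from the paper's. You apply the interval-independence statement after \eqref{eq:pilocal} directly to $\gamma$ for the pair of intervals $I_1$ and $I_2\cup I_3$. The paper instead goes back through the decomposition: it observes that under the extra hypothesis $\gamma_1$ itself splits as $\gamma_{1,2}\gamma_{1,3}$ with $\supp\gamma_{1,j}\subset I_1\cap I_j$, transfers each factor from $\pi_{\rho,I_1}$ to $\pi_{\rho,I_2\cup I_3}$ via DHR compatibility through the single small interval $I_1\cap I_j$, and then reassembles. The point is that $I_1\cup(I_2\cup I_3)=S^1$, so $I_1\cap(I_2\cup I_3)$ is disconnected and the literal compatibility condition in Section~\ref{sec:DHR} (nested intervals only) does not apply in one step; the general interval-independence you cite is itself proved by exactly this splitting. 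Your argument is therefore correct, but it leans on a statement whose justification in this particular case is the computation the paper spells out. If you want your proof to be self-contained, insert one line noting that $\gamma$ factors as a product of elements supported in $I_1\cap I_2$ and $I_1\cap I_3$, each of which can be routed through the corresponding small interval.
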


\begin{proof}
If $\supp \gamma \subset I_1$, then $\supp \gamma_2 \subset I_1 \cap I_2$ and $\supp \gamma_3 \subset I_1 \cap I_3$ by Lemma \ref{lm:fragmentation-LG} or Lemma \ref{lm:fragmentation-diff}, where both $I_1 \cap I_2$ and $I_1 \cap I_3$ are (proper connected) intervals.
Hence, since for every $I\subset \cU$ the section $s$ maps $\Gamma_I$ in $\tilde\Gamma_I$ and we assume that $\tpi_0(\tilde\Gamma_I)\subset\cA(I)$, we have $\pi_0(\gamma_{2}) \in \cA(I_1 \cap I_2)$ and $\pi_0(\gamma_{3}) \in \cA(I_1 \cap I_3)$, since $s(\Gamma_I)\subset\tilde\Gamma_I$ and $\tpi_0(\tilde\Gamma_I)\subset\cA(I)$ for all $I\in\cI$. By the compatibility condition of the DHR representation
\begin{align}\label{eq:compatiblei1}
 \pi_\rho(\gamma)
 &= \pi_{\rho, I_1}(\gamma_1)\pi_{\rho, I_2}(\gamma_2)\pi_{\rho, I_3}(\gamma_3)c(\gamma_1,\gamma_2)^{-1}c(\gamma_1\gamma_2,\gamma_3)^{-1} \nonumber \\
 &= \rho_{I_1}(\pi_0(\gamma_{1}))\rho_{I_2}(\pi_0(\gamma_{2}))\rho_{I_3}(\pi_0(\gamma_{3}))c(\gamma_1,\gamma_2)^{-1}c(\gamma_1\gamma_2,\gamma_3)^{-1} \nonumber \\ 
 &= \rho_{I_1}(\pi_0(\gamma_{1}))\rho_{I_1}(\pi_0(\gamma_{2}))\rho_{I_1}(\pi_0(\gamma_{3}))c(\gamma_1,\gamma_2)^{-1}c(\gamma_1\gamma_2,\gamma_3)^{-1} \nonumber \\  
 &= \pi_{\rho, I_1}(\gamma_1)\pi_{\rho, I_1}(\gamma_2)\pi_{\rho, I_1}(\gamma_3)c(\gamma_1,\gamma_2)^{-1}c(\gamma_1\gamma_2,\gamma_3)^{-1} \nonumber \\
 &= \pi_{\rho, I_1}(\gamma),
\end{align}
since $ \pi_{\rho, I_1}$ is a local multiplier representation of $\Gamma_{I_1}$. 
If, moreover, $\supp \gamma$ is contained in the disjoint union $(I_1\cap I_2) \cup (I_1\cap I_3)$, 
then $\gamma_1$ in the fragmentation of $\gamma$ is actually a product of two elements $\gamma_1 = \gamma_{1,2}\gamma_{1,3}$, where $\supp \gamma_{1,j} \subset I_1 \cap I_j$, $j=2,3$.
Again by the DHR compatibility condition
\begin{align*}
 \pi_{\rho, I_1}(\gamma_{1,j}) &= \rho_{I_1}(\pi_0(\gamma_{1,j})) = \rho_{I_1\cap I_j}(\pi_0(\gamma_{1,j})) = \rho_{I_j}(\pi_0(\gamma_{1,j})) \\
 &= \rho_{I_2\cup I_3}(\pi_0(\gamma_{1,j})) =\pi_{\rho, I_2\cup I_3}(\gamma_{1,j}).
\end{align*}
Therefore, 
\begin{align*}
  \pi_{\rho, I_1}(\gamma_1) &= \pi_{\rho, I_1}(\gamma_{1,2})\pi_{\rho, I_1}(\gamma_{1,3}) c(\gamma_{1,2}, \gamma_{1,3}) = \pi_{\rho, I_2\cup I_3}(\gamma_{1,2})\pi_{\rho, I_2\cup I_3}(\gamma_{1,3}) c(\gamma_{1,2}, \gamma_{1,3}) \\
  &= \pi_{\rho, I_2\cup I_3}(\gamma_{1,2}\gamma_{1,3})
  = \pi_{\rho, I_2\cup I_3}(\gamma_1),
\end{align*}
and hence
\begin{align*}
 \pi_\rho(\gamma) &= \pi_{\rho, I_1}(\gamma_1)\pi_{\rho, I_2}(\gamma_2)\pi_{\rho, I_3}(\gamma_3) c(\gamma_1,\gamma_2)^{-1}c(\gamma_1\gamma_2,\gamma_3)^{-1} \\
 &= \pi_{\rho, I_2\cup I_3}(\gamma_1)\pi_{\rho, I_2}(\gamma_2)\pi_{\rho, I_3}(\gamma_3) c(\gamma_1,\gamma_2)^{-1}c(\gamma_1\gamma_2,\gamma_3)^{-1} \\
 &= \pi_{\rho, I_2\cup I_3}(\gamma),
\end{align*}
where we used the fact that $ \pi_{\rho, I_j}(\gamma_j) = \rho_{I_j}(\pi_0(\gamma_j)) = \rho_{I_2\cup I_3}(\pi_0(\gamma_j))=\pi_{\rho, I_2\cup I_3}(\gamma_j)$ for $j=2,3$. 
\end{proof}

The key idea to construct a new representation of $\Gamma$
is that the local multiplier representations $\{\pi_{\rho,I}\}$ of the subgroups $\{\Gamma_I\}$ of elements localized in $I$
defined through a DHR representation $\rho$ can be glued together, see \eqref{eq:pidefonU}.
Yet, there is a concern that the cocycle does not match on the whole circle $S^1$ and $\pi_\rho$ is no longer a local multiplier representation.
We will exclude this possibility by exploiting the fact that the cocycle is completely determined by localized elements and
by using the localized unitary equivalence between $\pi_{\rho,I}$ and the vacuum representation $\pi_0$.

\begin{theorem}\label{th:local}
 There is a sufficiently small neighbourhood $\cV \subset \cU$ such that
 the map $\pi_\rho$ defined in \eqref{eq:pidefonU} restricted to $\cV$ does not depend on the decomposition $\gamma = \gamma_1 \gamma_2 \gamma_3$, where $\gamma_j \in \cV$,
 and $\gamma_j$ has support in $I_j$, $j=1,2,3$.
 Moreover, the map $\pi_\rho$ is a local multiplier representation of $\Gamma$, namely, for $\gamma, \gamma' \in \cV$,
 \[
 \pi_\rho(\gamma)\pi_\rho(\gamma') = c(\gamma,\gamma')\pi_\rho(\gamma\gamma'),
 \]
 where $c$ is the same cocycle of the local multiplier representation $\pi_0$.
\end{theorem}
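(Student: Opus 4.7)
The plan is to first establish decomposition independence of \eqref{eq:pidefonU}, and then deduce the local multiplier identity. The main tools will be (a) the DHR compatibility $\pi_{\rho, I}(\mu) = \pi_{\rho, J}(\mu)$ for $\mu$ with $\supp \mu \subset I \cap J$ (the key fact underlying Lemma \ref{lm:picompatible}); (b) the observation that each $\pi_{\rho, I_j} = \rho_{I_j}\circ\pi_0$ is a local multiplier representation of $\Gamma_{I_j}$ with the same cocycle $c$ as $\pi_0$, since $\rho_{I_j}$ is a $\ast$-homomorphism on $\cA(I_j)$ and fixes scalars; and (c) the fact that local factors coming from elements with disjoint supports commute, inherited from the disjoint-support vanishing of the Lie-algebra 2-cocycles noted in Section \ref{setting}, together with DHR locality.

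For the first step, given two fragmentations $\gamma = \gamma_1\gamma_2\gamma_3 = \gamma'_1\gamma'_2\gamma'_3$ with $\supp\gamma_j, \supp\gamma'_j\subset I_j$, I will connect them by a finite chain of elementary moves of the form $(\gamma_j, \gamma_{j+1})\rightsquigarrow(\gamma_j\mu, \mu^{-1}\gamma_{j+1})$ with $\supp\mu\subset I_j\cap I_{j+1}$. Expanding $\pi_{\rho, I_j}(\gamma_j\mu)$ and $\pi_{\rho, I_{j+1}}(\mu^{-1}\gamma_{j+1})$ via the local multiplier identity and using (a) to identify $\pi_{\rho, I_j}(\mu) = \pi_{\rho, I_{j+1}}(\mu)$, the $\mu$-dependent cocycle contributions telescope through the 2-cocycle identity for $c$, so that \eqref{eq:pidefonU} is unchanged. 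That any two valid fragmentations can be joined by such a chain for $\cV$ small enough will follow from a local factorization argument: near the identity, the kernel of the multiplication map $\Gamma_{I_1}\times\Gamma_{I_2}\times\Gamma_{I_3}\to\Gamma$ is generated by insertions of the form $(\mu,\mu^{-1},e)$ and $(e,\mu,\mu^{-1})$ with $\mu\in\Gamma_{I_j\cap I_{j+1}}$.

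For the second step, take $\gamma,\gamma'\in\cV$ (chosen so small that $\gamma\gamma'$ and all intermediate products remain in $\cU$). Expanding $\pi_\rho(\gamma)\pi_\rho(\gamma')$ via \eqref{eq:pidefonU} produces six local factors dressed by scalar cocycles; using decomposition independence from Step 1 I will rearrange them into a fragmentation of $\gamma\gamma'$ by commuting disjoint-support factors past each other (by (c)), combining same-interval factors via the local multiplier identity for each $\pi_{\rho, I_j}$ (by (b)), and tracking cocycles through the 2-cocycle identity for $c$. Since every algebraic step uses only relations already satisfied by $\pi_0$ and preserved by the $\ast$-homomorphisms $\rho_{I}$, the accumulated scalar factor is exactly the same as the one that would appear in the tautological vacuum identity $\pi_0(\gamma)\pi_0(\gamma') = c(\gamma,\gamma')\pi_0(\gamma\gamma')$, giving $\pi_\rho(\gamma)\pi_\rho(\gamma') = c(\gamma,\gamma')\pi_\rho(\gamma\gamma')$.

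The hardest part will be the connectedness claim in Step 1: for $\cV$ sufficiently small, every pair of valid fragmentations should be linkable by a \emph{bounded} chain of elementary moves whose intermediate elements stay in $\cU$. This will rely on the explicit fragmentation maps $\Xi_j$ of Lemmas \ref{lm:fragmentation-LG} and \ref{lm:fragmentation-diff}, together with the local product structure of the subgroups $\Gamma_{I_j}$ and $\Gamma_{I_j\cap I_{j+1}}$, and may require shrinking $\cV$ to control the growth of elements under successive moves. A secondary, bookkeeping-level difficulty is to verify that the cocycle identity for $c$ indeed telescopes the scalar factors in both steps as claimed.
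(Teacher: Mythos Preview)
Your overall strategy is sound in spirit and shares the paper's core principle: any scalar identity obtained by manipulating localized factors in $\pi_0$ transfers verbatim to $\pi_\rho$, since each $\rho_I$ is a $*$-homomorphism fixing scalars and $\pi_{\rho,I}$ is unitarily equivalent to $\pi_0|_{\Gamma_I}$ on every proper interval $I$. However, both steps have genuine gaps.

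In Step~1, your claim that the kernel of $\Gamma_{I_1}\times\Gamma_{I_2}\times\Gamma_{I_3}\to\Gamma$ is generated by $(\mu,\mu^{-1},e)$ and $(e,\mu,\mu^{-1})$ is false: these only involve $\mu$ supported in $I_1\cap I_2$ or $I_2\cap I_3$, but the kernel also contains $(\mu,e,\mu^{-1})$ with $\mu\in\Gamma_{I_1\cap I_3}$ (the intervals $I_1,I_3$ overlap on the circle, and the three overlap regions are pairwise disjoint, so this element is not generated by your two types). You need a third move $(\gamma_1,\gamma_2,\gamma_3)\rightsquigarrow(\gamma_1\mu^{-1},\gamma_2,\mu\gamma_3)$ with $\mu\in\Gamma_{I_1\cap I_3}$, justified by commuting $\mu$ past $\gamma_2$. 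With this addition your telescoping works. The paper sidesteps the issue: it notes that $\gamma_1^{-1}\gamma'_1$ is supported in $I_1\cap(I_2\cup I_3)$, uses Lemma~\ref{lm:picompatible} to rewrite $\pi_{\rho,I_1}(\gamma_1^{-1}\gamma'_1)=\pi_{\rho,I_2\cup I_3}(\gamma_1^{-1}\gamma'_1)$, and then evaluates the entire comparison inside the single proper interval $I_2\cup I_3$, where $\pi_{\rho,I_2\cup I_3}$ is spatially equivalent to $\pi_0$.

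More seriously, your Step~2 plan of ``commuting disjoint-support factors past each other'' cannot reduce the six-factor product to a three-term fragmentation of $\gamma\gamma'$: the adjacent factors $\gamma_3$ and $\gamma'_1$ have supports in $I_3$ and $I_1$, which \emph{overlap} in $I_1\cap I_3$, so they do not commute; the same obstruction arises for $(\gamma_2,\gamma'_1)$ and $(\gamma_3,\gamma'_2)$. The paper's missing ingredient is a \emph{swap re-decomposition}: using the fragmentation lemmas (on a suitably shrunk cover) one writes $\gamma_3\gamma'_1=\gamma''_1\gamma''_3$ with $\gamma''_j\in\Gamma_{I_j}$, then $\gamma_2\gamma''_1=\gamma'''_1\gamma'''_2$ and $\gamma''_3\gamma'_2=\gamma''''_2\gamma''''_3$, yielding $\gamma\gamma'=(\gamma_1\gamma'''_1)(\gamma'''_2\gamma''''_2)(\gamma''''_3\gamma'_3)$. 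Each swap lives inside a single proper interval ($I_1\cup I_3$, $I_1\cup I_2$, $I_2\cup I_3$), so the scalar it produces in $\pi_\rho$ agrees with the one in $\pi_0$, and the accumulated scalar is indeed $c(\gamma,\gamma')$. Without this swap step, your rearrangement cannot begin.
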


In the following proof, ``local'' refers to neighbourhoods of the group $\Gamma$, while ``localized''
 is about the supports of group elements.

\begin{proof} Let $\cV$ be a neighbourhood of $e_\Gamma$ such that there is a large enough $n \in \bbN$
such that $\cV^n := \underset{n\text{-times}}{\cV \cdots \cV} \subset \cU$.
 Let $\gamma = \gamma'_1 \gamma'_2 \gamma'_3$ be another decomposition, where again $\gamma'_j \in \cV$ and $\gamma'_j$ has support in $I_j$. As it holds that $\gamma_3^{-1}\gamma_2^{-1}\gamma_1^{-1}\gamma'_1\gamma'_2\gamma'_3 = e_\Gamma$ in $\Gamma$
and $\pi_0(\gamma_1^{-1}) = c(\gamma_1,\gamma_1^{-1})\pi_0(\gamma_1)^*$,
we set
\[
 c(\gamma_1,\gamma_2,\gamma_3,\gamma'_1,\gamma'_2,\gamma'_3) := \pi_0(\gamma_3)^*\pi_0(\gamma_2)^*\pi_0(\gamma_1^{-1}\gamma'_1)\pi_0(\gamma'_2)\pi_0(\gamma'_3). 
\]
Using the assumption (see Section \ref{setting}) that $\tpi(\phi(H)) \subset \bbT\bb1$,
where $\phi, H$ are as in Section \ref{central}, we see that $c(\gamma_1,\gamma_2,\gamma_3,\gamma'_1,\gamma'_2,\gamma'_3)\in \bbT$. On the other hand, in the vacuum representation $\pi_0$ we have
\begin{align*}
 \pi_0(\gamma) &= \pi_0(\gamma_1)\pi_0(\gamma_2)\pi_0(\gamma_3)c(\gamma_1,\gamma_2)^{-1}c(\gamma_1\gamma_2,\gamma_3)^{-1} \\
 &= \pi_0(\gamma'_1)\pi_0(\gamma'_2)\pi_0(\gamma'_3)c(\gamma'_1,\gamma'_2)^{-1}c(\gamma'_1\gamma_2',\gamma'_3)^{-1},
\end{align*}
where the local multiplier representation $\pi_0$ is defined in the beginning of this section. Therefore, the following relation involving the cocycle $c$ from the vacuum representation $\pi_0$ holds
 \begin{align}\label{eq:csix}
 &c(\gamma_1,\gamma_2,\gamma_3,\gamma'_1,\gamma'_2,\gamma'_3)c(\gamma_1^{-1},\gamma'_1)c(\gamma_1,\gamma_1^{-1})^{-1}c(\gamma_1,\gamma_2)c(\gamma_1\gamma_2,\gamma_3)c(\gamma'_1,\gamma'_2)^{-1}c(\gamma'_1\gamma_2',\gamma'_3)^{-1} \nonumber \\
 &= 1,
 \end{align}
where $c(\gamma_1,\gamma_2,\gamma_3,\gamma'_1,\gamma'_2,\gamma'_3) \in \bbT$ is defined above.

Now we take two decompositions $\gamma = \gamma_1\gamma_2\gamma_3 = \gamma'_1\gamma'_2\gamma'_3$.
Note that $\pi_{\rho, I}$ is unitarily equivalent to $\pi_{0}|_{\Gamma_I}$ on any proper interval $I$
as $\rho_I$ is spacial (see Section \ref{sec:DHR}), therefore,
\begin{align}\label{eq:gammainv}
 \pi_{\rho, I_1}(\gamma_1)^*\pi_{\rho, I_1}(\gamma'_1) = c(\gamma_1^{-1},\gamma'_1)c(\gamma_1,\gamma_1^{-1})^{-1}\pi_{\rho, I_1}(\gamma_1^{-1}\gamma'_1),
\end{align}
and that $\gamma_1^{-1}\gamma'_1 = \gamma_2\gamma_3\gamma_3^{\prime-1}\gamma_2^{\prime-1}$ has support in $I_1 \cap (I_2\cup I_3)$.
Then we can again use the unitary equivalence between $\pi_\rho$ and $\pi_0$ on $I_2\cup I_3$ and Lemma \ref{lm:picompatible}
to obtain
\begin{align*}
 &\pi_{\rho, I_3}(\gamma_3)^*\pi_{\rho, I_2}(\gamma_2)^*\pi_{\rho, I_1}(\gamma_1^{-1}\gamma'_1)\pi_{\rho, I_2}(\gamma'_2)\pi_{\rho, I_3}(\gamma'_3) \\
 &=\pi_{\rho, I_2\cup I_3}(\gamma_3)^*\pi_{\rho, I_2\cup I_3}(\gamma_2)^*\pi_{\rho, I_2\cup I_3}(\gamma_1^{-1}\gamma'_1)\pi_{\rho, I_2\cup I_3}(\gamma'_2)\pi_{\rho, I_2\cup I_3}(\gamma'_3) \\
 &= c(\gamma_1,\gamma_2,\gamma_3,\gamma'_1,\gamma'_2,\gamma'_3).
\end{align*}
Using \eqref{eq:csix} and \eqref{eq:gammainv},
the above computations lead  to the equality
\begin{align*}
 &\pi_{\rho, I_1}(\gamma_1)\pi_{\rho, I_2}(\gamma_2)\pi_{\rho, I_3}(\gamma_3)c(\gamma_1,\gamma_2)^{-1}c(\gamma_1\gamma_2,\gamma_3)^{-1} \\
 =\;& \pi_{\rho, I_1}(\gamma'_1)\pi_{\rho, I_2}(\gamma'_2)\pi_{\rho, I_3}(\gamma'_3)c(\gamma'_1,\gamma'_2)^{-1}c(\gamma'_1\gamma_2',\gamma'_3)^{-1}.
\end{align*}
In other words, $\pi_\rho$ is well-defined on $\cV$.

Next we show that $\pi_\rho$ is a local multiplier representation of $\Gamma$ in $\cV$.
Let $\gamma, \gamma'\in\cV$ and we take decompositions $\gamma=\gamma_1\gamma_2\gamma_3, \gamma'=\gamma'_1\gamma'_2\gamma'_3$.
We first look at the product $\gamma_3\gamma'_1$. This is supported in $I_1\cup I_3$.
We can find another decomposition $\gamma_3\gamma'_1 = \gamma''_1\gamma''_3$, where $\gamma''_j$ is supported in $I_j$. Indeed, using the map $\Xi_1$ from Section \ref{fragmentation} and by shrinking $I_2$ if necessary such that $I_2 \cap \supp \gamma'_1 = \emptyset$ we can define $\gamma''_1 = \Xi_1(\gamma_3\gamma'_1)$ supported in $I_1$. We now define $\gamma''_3 = (\gamma''_1)^{-1}\gamma_3\gamma'_1$, and we show that $\gamma''_3$ is supported in $I_3$.
We treat the two cases separately:
\begin{itemize}
	\item The case of $\Gamma=LG$: following Lemma \ref{lm:fragmentation-LG}, $\Xi_1(\gamma_3\gamma'_1)=\mathrm{Exp}(\chi_1\eta)$, where $\chi_1=1$ on a slightly larger set than $S^1\setminus(I_2\cup I_3)$ and $\eta\in L\frg$ such that $\mathrm{Exp}(\eta)=\gamma_3\gamma'_1$. Hence, $\gamma_3\gamma'_1(\theta)=\Xi_1(\gamma_3\gamma'_1)(\theta)$ implies $\gamma''_3(\theta)=e_{G}$, which holds on $S^1 \setminus (I_2\cup I_3)$. For $\theta\in I_2\setminus I_3$, one has $\gamma_3(\theta)\gamma'_1(\theta)=e_{G}$ since $\gamma_3$ is supported in $I_3$ and $\gamma'_1$ is supported in $I_1\setminus I_2$. Altogether, $\gamma''_3(\theta)=e_G$ on $S^1 \setminus I_3$.
	\item The case of $\Gamma=\uDiffSone$: following a similar argument as in the case of $\Gamma=LG$, but using Lemma \ref{lm:fragmentation-diff}, we again have that $\gamma''_3(\theta)=\theta$ where $\gamma_3\gamma'_1(\theta)=\Xi_1(\gamma_3\gamma'_1)(\theta)$.
	We know that $\gamma_3\gamma'_1(\theta)=\Xi_1(\gamma_3\gamma'_1)(\theta)$ on $[\hat a_1,\hat b_1] = \hat I_1$, hence $\supp \gamma''_3 \subset S^1 \setminus \hat I_1$. Moreover, in $(a_2,b_1) \subset I_2$ we have $\gamma'_1(\theta)=\theta$ (since the support of $\gamma'_1$ is disjoint from $I_2$ by assumption) and $\gamma_3(\theta)=\theta$ (since $(a_2,b_1)$ is disjoint from $(a_3,b_3)$), thus $\gamma_3\gamma'_1(\theta)=\theta$
	for $\theta \in (a_2, b_1)$ and by the last part of Lemma \ref{lm:fragmentation-diff} $\Xi_1(\gamma_3\gamma'_1)(\theta)=\theta$ as well,
	showing that $\gamma''_3(\theta) = \theta$ for $\theta \in (a_2, b_1)$.
	Altogether, $\supp \gamma''_3 \subset (I_1 \cup I_3) \setminus (\hat I_1 \cup (a_2, b_1)) \subset I_3$.
\end{itemize}
By repeating the above argument to $\gamma_2\gamma_1''$, by shrinking $I_3$ if necessary such that $I_3 \cap \supp \gamma''_1 = \emptyset$, we obtain a decomposition $\gamma'''_1\gamma'''_2$ where $\gamma'''_1$, $\gamma'''_2$ are supported respectively in $I_1$ and $I_2$. Swapping the role of $I_1$ and $I_2$, we can construct a new decomposition for $\gamma_3''\gamma_2'$ following a similar argument. In conclusion, we have
\begin{align*}
 \gamma\gamma' &= \gamma_1\gamma_2\gamma_3\gamma'_1\gamma'_2\gamma'_3 \\
 &= \gamma_1\gamma_2\gamma''_1\gamma''_3\gamma'_2\gamma'_3 \\
 &= \gamma_1\gamma'''_1\gamma'''_2\gamma''''_2\gamma''''_3\gamma'_3,
\end{align*}
where $\gamma'''_j,\gamma''''_j$ are supported in $I_j$.

Again, there are $c(\gamma_3,\gamma'_1,\gamma''_1,\gamma''_3),
c(\gamma_2,\gamma''_1,\gamma'''_1,\gamma'''_2), c(\gamma''_3,\gamma'_2,\gamma''''_2,\gamma''''_3)\in\bbT$ that satisfy the following relations
in the vacuum representation
\begin{align*}
 \pi_0(\gamma_3)\pi_0(\gamma'_1) &= \pi_0(\gamma''_1)\pi_0(\gamma''_3)c(\gamma_3,\gamma'_1,\gamma''_1,\gamma''_3),\\
 \pi_0(\gamma_2)\pi_0(\gamma''_1) &= \pi_0(\gamma'''_1)\pi_0(\gamma'''_2)c(\gamma_2,\gamma''_1,\gamma'''_1,\gamma'''_2),\\
 \pi_0(\gamma''_3)\pi_0(\gamma'_2) &= \pi_0(\gamma''''_2)\pi_0(\gamma''''_3)c(\gamma''_3,\gamma'_2,\gamma''''_2,\gamma''''_3).
\end{align*}
Therefore,
\begin{align*}
 &c(\gamma,\gamma') \pi_0(\gamma\gamma')\\
 =&\;\pi_0(\gamma)\pi_0(\gamma')\\
 =&\;c(\gamma_1,\gamma_2)^{-1}c(\gamma_1\gamma_2,\gamma_3)^{-1}c(\gamma'_1,\gamma'_2)^{-1}c(\gamma'_1\gamma'_2,\gamma'_3)^{-1}\\
 &\times \pi_0(\gamma_1)\pi_0(\gamma_2)\pi_0(\gamma_3)\pi_0(\gamma'_1)\pi_0(\gamma'_2)\pi_0(\gamma'_3) \\
 =&\;c(\gamma_1,\gamma_2)^{-1}c(\gamma_1\gamma_2,\gamma_3)^{-1}c(\gamma'_1,\gamma'_2)^{-1}c(\gamma'_1\gamma'_2,\gamma'_3)^{-1}\\
 &\times c(\gamma_3,\gamma'_1,\gamma''_1,\gamma''_3)c(\gamma_2,\gamma''_1,\gamma'''_1,\gamma'''_2) c(\gamma''_3,\gamma'_2,\gamma''''_2,\gamma''''_3)\\
 &\times \pi_0(\gamma_1)\pi_0(\gamma'''_1)\pi_0(\gamma'''_2)\pi_0(\gamma''''_2)\pi_0(\gamma''''_3)\pi_0(\gamma'_3)\\
 =&\;c(\gamma_1,\gamma_2)^{-1}c(\gamma_1\gamma_2,\gamma_3)^{-1}c(\gamma'_1,\gamma'_2)^{-1}c(\gamma'_1\gamma'_2,\gamma'_3)^{-1}\\
 &\times c(\gamma_3,\gamma'_1,\gamma''_1,\gamma''_3)c(\gamma_2,\gamma''_1,\gamma'''_1,\gamma'''_2) c(\gamma''_3,\gamma'_2,\gamma''''_2,\gamma''''_3)\\
 &\times c(\gamma_1,\gamma'''_1)c(\gamma'''_2,\gamma''''_2)c(\gamma''''_3,\gamma'_3)\cdot\pi_0(\gamma_1\gamma'''_1)\pi_0(\gamma'''_2\gamma''''_2)\pi_0(\gamma''''_3\gamma'_3)\\
 =&\;c(\gamma_1,\gamma_2)^{-1}c(\gamma_1\gamma_2,\gamma_3)^{-1}c(\gamma'_1,\gamma'_2)^{-1}c(\gamma'_1\gamma'_2,\gamma'_3)^{-1}\\
 &\times c(\gamma_3,\gamma'_1,\gamma''_1,\gamma''_3)c(\gamma_2,\gamma''_1,\gamma'''_1,\gamma'''_2) c(\gamma''_3,\gamma'_2,\gamma''''_2,\gamma''''_3)\\
 &\times c(\gamma_1,\gamma'''_1)c(\gamma'''_2,\gamma''''_2)c(\gamma''''_3,\gamma'_3) c(\gamma_1\gamma'''_1,\gamma'''_2\gamma''''_2)c(\gamma_1\gamma'''_1\gamma'''_2\gamma''''_2,\gamma''''_3\gamma'_3)\cdot\pi_0(\gamma\gamma'),
 \end{align*}
or equivalently, the following relation between scalars holds
\begin{align*}
 c(\gamma,\gamma') =&\;c(\gamma_1,\gamma_2)^{-1}c(\gamma_1\gamma_2,\gamma_3)^{-1}c(\gamma'_1,\gamma'_2)^{-1}c(\gamma'_1\gamma'_2,\gamma'_3)^{-1}\\
 &\times c(\gamma_3,\gamma'_1,\gamma''_1,\gamma''_3)c(\gamma_2,\gamma''_1,\gamma'''_1,\gamma'''_2) c(\gamma''_3,\gamma'_2,\gamma''''_2,\gamma''''_3)\\
 &\times c(\gamma_1,\gamma'''_1)c(\gamma'''_2,\gamma''''_2)c(\gamma''''_3,\gamma'_3)\cdot c(\gamma_1\gamma'''_1,\gamma'''_2\gamma''''_2)c(\gamma_1\gamma'''_1\gamma'''_2\gamma''''_2,\gamma''''_3\gamma'_3).
 \end{align*}
Now, in order to show that $\pi_\rho$ is a local multiplier representation, we only have to compute
\begin{align*}
 &\;\pi_\rho(\gamma)\pi_\rho(\gamma') \\
 =&\;c(\gamma_1,\gamma_2)^{-1}c(\gamma_1\gamma_2,\gamma_3)^{-1}c(\gamma'_1,\gamma'_2)^{-1}c(\gamma'_1\gamma'_2,\gamma'_3)^{-1}\\
 &\times \pi_\rho(\gamma_1)\pi_\rho(\gamma_2)\pi_\rho(\gamma_3)\pi_\rho(\gamma'_1)\pi_\rho(\gamma'_2)\pi_\rho(\gamma'_3) \\
 =&\;c(\gamma_1,\gamma_2)^{-1}c(\gamma_1\gamma_2,\gamma_3)^{-1}c(\gamma'_1,\gamma'_2)^{-1}c(\gamma'_1\gamma'_2,\gamma'_3)^{-1}\\
 &\times \pi_\rho(\gamma_1)\pi_\rho(\gamma'''_1)\pi_\rho(\gamma'''_2)\pi_\rho(\gamma''''_2)\pi_\rho(\gamma''''_3)\pi_\rho(\gamma'_3)\\
 &\times c(\gamma_3,\gamma'_1,\gamma''_1,\gamma''_3)c(\gamma_2,\gamma''_1,\gamma'''_1,\gamma'''_2) c(\gamma''_3,\gamma'_2,\gamma''''_2,\gamma''''_3)\\
 =&\;c(\gamma,\gamma')\left(c(\gamma_1,\gamma'''_1)c(\gamma'''_2,\gamma''''_2)c(\gamma''''_3,\gamma'_3)\cdot c(\gamma_1\gamma'''_1,\gamma'''_2\gamma''''_2)c(\gamma_1\gamma'''_1\gamma'''_2\gamma''''_2,\gamma''''_3\gamma'_3)\right)^{-1}\\
 &\times \pi_\rho(\gamma_1)\pi_\rho(\gamma'''_1)\pi_\rho(\gamma'''_2)\pi_\rho(\gamma''''_2)\pi_\rho(\gamma''''_3)\pi_\rho(\gamma'_3)\\
 =&\;c(\gamma,\gamma')\left( c(\gamma_1\gamma'''_1,\gamma'''_2\gamma''''_2)c(\gamma_1\gamma'''_1\gamma'''_2\gamma''''_2,\gamma''''_3\gamma'_3)\right)^{-1}\\
 &\times \pi_\rho(\gamma_1\gamma'''_1)\pi_\rho(\gamma'''_2\gamma''''_2)\pi_\rho(\gamma''''_3\gamma'_3)\\ 
 =&\;c(\gamma,\gamma')\pi_\rho(\gamma\gamma')
\end{align*}
where we used local equivalence between $\pi_\rho$ and $\pi_0$ in the 2nd (on $I_3\cup I_1$, $I_2\cup I_1$, and $I_3\cup I_2$) and 4th equalities (on $I_1$, $I_2$, and $I_3$), the previous scalar relation in the 3rd equality, and the well-definedness of $\pi_\rho$ (independence on the partition of an element of $\Gamma$
into localized elements) in the 5th equality.
\end{proof}

K\"oster states that a DHR representation induces a projective representation of $\DiffSone$ \cite[after Proposition III.2]{KoesterThesis}.
In comparison, here we construct a local multiplier representation in the case $\Gamma = LG, \DiffSone$, with the same cocycle as that of the vacuum representation.

\section{Some applications}\label{application}
\subsection{Group representations from DHR sectors}\label{grouprep}
Let $(\cA, U, \Omega)$, $\Gamma$ (either $LG$ or $\uDiffSone$), $\tpi_0$, $\rho$ be as in Section \ref{setting}.
In Section \ref{grouprp}, we obtained a local multiplier representation $\pi_\rho$ of $\Gamma$ on the Hilbert space $\cH_\rho$ of the DHR representation $\rho$.
Furthermore, denoted by $\tilde U$ the extension of $U$ (projective representation of $\Diff_+(S^1)$) to $\Vir$, we have that $\tilde U(\Vir_I) \subset \cA(I)$, and that the lift of the $2\pi$-rotation is a scalar (see Remark \ref{rm:UinConformalNet}). Therefore, given any conformal net $(\cA, U, \Omega)$ and one of its DHR representations $\rho$, the construction in Section \ref{grouprp} can be applied to the extension $\tilde U$ of $U$ to $\Vir$, giving a local multiplier representation $U_\rho$ of $\Gamma=\uDiffSone$. In the case $\Gamma=LG$, the local multiplier representation $U_\rho$ of $\uDiffSone$ makes $\pi_\rho$ covariant, since we assumed that $U$ makes $\tpi_0$ covariant. Note that for a generic conformal net $(\cA, U, \Omega)$, we assumed $\tilde U = \tpi_0$.

\begin{theorem}\label{th:ext}
 The map $\pi_\rho$ defined in \eqref{eq:pidefonU} extends to a unitary positive-energy representation of $\tilde\Gamma$ (either $\tilde LG$ or $\Vir$)
 denoted by $\tpi_\rho$.
 Furthermore, if this construction is applied to $U_\rho$ above in place of $\pi_\rho$
 and if $\rho$ is factorial\footnote{$\rho$ is factorial if $\left(\bigcup_{I \in \cI} \rho_I(\cA(I))\right)''$ is a factor, i.e.\ a von Neumann algebra with trivial center.},
 then $U_\rho$ induces a projective representation of $\DiffSone$, i.e.,  the lift of the $2\pi$-rotation is scalar.
\end{theorem}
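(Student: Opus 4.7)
The plan is to apply Lemma \ref{lm:extlocal} to the local multiplier representation $\pi_\rho$ from Theorem \ref{th:local} to obtain a unitary extension $\tpi_\rho$ of $\tilde\Gamma$, then to deduce positivity of the rotation generator and centrality of the $2\pi$-rotation using covariance of $\rho$ together with the factoriality hypothesis.

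For the unitary extension, I would verify the hypotheses of Lemma \ref{lm:extlocal}. In both cases $\tilde\Gamma = \tilde LG$ and $\tilde\Gamma = \Vir$, $\tilde\Gamma$ is connected, simply connected, and locally path connected (Sections \ref{LG} and \ref{diff}); the local trivialization gives a local continuous section $s$ on $\cU$ whose $H$-valued cocycle $\bfc$ yields a local isomorphism $\tilde\Gamma \cong H \times_\bfc \Gamma$; and by Section \ref{setting}, $\tpi_0(\phi(H)) \subset \bbT\bb1$, so $c := \pi_0 \circ \bfc$, with $\pi_0 := \tpi_0\circ s$, is the $\bbT$-valued cocycle of $\pi_0$. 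Theorem \ref{th:local} states that $\pi_\rho$ is a local multiplier representation of $\Gamma$ with the \emph{same} cocycle $c$, so Lemma \ref{lm:extlocal} extends $\pi_H\times\pi_\rho$ to a (true) unitary representation $\tpi_\rho$ of $\tilde\Gamma$.

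For positive energy, in the $\Vir$ case the rotation subgroup already sits in $\uDiffSone\subset\Vir$ and $\tpi_\rho$ acts on it. In the $\tilde LG$ case, I would run the same construction with $\tilde U$ in place of $\tpi_0$ to obtain a unitary representation $U_\rho$ of $\Vir$ on $\cH_\rho$; by the covariance assumption (2) in Section \ref{setting}, its restriction to rotations combines with $\tpi_\rho$ into a representation of $\tilde LG \rtimes \bbR$. Non-negativity of the rotation generator then follows from Guido--Longo's automatic M\"obius covariance with positive energy for DHR representations \cite{GL96}: the rotation generator in $\tpi_\rho$ is the conformal Hamiltonian of $\rho$ and differs at most by an additive scalar from the canonical positive implementer, which can be absorbed.

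For the second assertion, apply the first part with $\tilde U$ in place of $\tpi_0$ to extend $U_\rho$ to $\Vir$. Let $\tau\in\uDiffSone$ denote the $2\pi$-rotation and $\tilde\tau\in\Vir$ its lift: a direct calculation using $2\pi$-periodicity of $\gamma'$ gives $\bfB(\tilde\tau,\cdot)=\bfB(\cdot,\tilde\tau)=0$, and together with centrality of $\tau$ in $\uDiffSone$ (which follows from $\gamma(t+2\pi)=\gamma(t)+2\pi$), this makes $\tilde\tau$ central in $\Vir$. I pin down $U_\rho(\tilde\tau)$ by two containments. First, $U_\rho(\tilde\tau)\in\rho(\cA):=\bigl(\bigvee_I\rho_I(\cA(I))\bigr)''$: near the identity, $U_\rho(\gamma)=\rho_I(\tilde U(s(\gamma)))\in\rho_I(\cA(I))\subset\rho(\cA)$ for $\gamma$ localized in $I$, and since $\Vir$ is topologically generated by its localized subgroups, strong continuity gives $U_\rho(\Vir)\subset\rho(\cA)$. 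Second, $U_\rho(\tilde\tau)\in\rho(\cA)'$: the local covariance identity $U_\rho(\gamma)\rho_I(x)U_\rho(\gamma)^*=\rho_{\gamma I}(\tilde U(\gamma)x\tilde U(\gamma)^*)$, obtained from the construction and extended globally by path-connectedness of $\uDiffSone$, combined with $\tilde\tau I = I$ and $\tilde U(\tilde\tau)\in\bbT\bb1$ (since $U$ is a projective representation of $\DiffSone$ rather than $\uDiffSone$, cf.\ Remark \ref{rm:UinConformalNet}), forces conjugation by $U_\rho(\tilde\tau)$ to act trivially on each $\rho_I(\cA(I))$. Hence $U_\rho(\tilde\tau)\in Z(\rho(\cA))$, and factoriality makes it a scalar. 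The main obstacle is positive energy: Lemma \ref{lm:extlocal} offers no control over the spectrum of the rotation generator, so one must invoke the Guido--Longo theorem (or adapt its proof) to pin down the sign; a secondary technical point is globalizing the local covariance identity for $U_\rho$ needed in the second part.
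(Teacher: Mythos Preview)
Your extension step via Lemma~\ref{lm:extlocal} and your argument that the lifted $2\pi$-rotation lies in the center of $\rho(\cA)$ match the paper's proof. The gap is positive energy. Citing \cite{GL96} gives you a positive-energy M\"obius implementation $V_\rho$ of $\rho$ built from modular theory, but you have not shown that $V_\rho$ agrees on rotations with $\tilde U_\rho$. The first assertion of the theorem carries no factoriality hypothesis, so the discrepancy $\tilde U_\rho(\tau_{\bar t})V_\rho(\tau_{\bar t})^*$ is only known to commute with $\rho(\cA)$ and need not be scalar; even under factoriality you would still need it to lie in $\rho(\cA)''$, which is not automatic for the modular-theoretic $V_\rho$. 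And ``absorbing'' an additive constant into the generator would change the specific extension $\tpi_\rho$ whose positivity the theorem asserts.

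The paper closes this gap by invoking Weiner \cite[Proposition~3.8]{Weiner06} rather than \cite{GL96}. Weiner proves directly that the inner implementation of rotations arising from the localized diffeomorphism symmetry of \cite{DFK04} has non-negative generator, and the paper observes that its $U_\rho$ is precisely that implementation with the phases fixed (by construction $U_\rho(\gamma)=\rho_I(U(\gamma))$ for $\gamma\in\Diff_+(I)$). Hence Weiner's positivity transfers to $\tilde U_\rho$ itself, with no additive ambiguity to absorb. Your diagnosis that Lemma~\ref{lm:extlocal} alone gives no spectral control is correct; the missing input is \cite{Weiner06}, not \cite{GL96}.
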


\begin{proof}
 As $\tilde\Gamma$ is connected, simply connected and locally path connected, $\pi_\rho$ in Theorem \ref{th:local} extends to an everywhere defined unitary representation $\tpi_\rho$
 of $\tilde\Gamma$ by Lemma \ref{lm:extlocal}.
 Similarly, denote by $\tilde U_\rho$ the unitary representation of $\Vir$ extending the local multiplier representation $U_\rho$ of $\uDiffSone$ constructed in Theorem \ref{th:local}.
 In the latter case, since $\Vir = \bbR \times_{\bfB} \uDiffSone$ has a global section,
 $U_\rho$ can be extended everywhere in $\uDiffSone$ as a global multiplier representation, again denoted by $U_\rho$.

 Weiner showed that any DHR representation $\rho$ of a conformal (diffeomorphism covariant) net has positive energy, cf. \cite[Proposition 3.8]{Weiner06}.
 Note that the implementation of diffeomorphisms as a projective representation used in \cite[Proposition 3.3]{Weiner06} is given by \cite{DFK04},
 whereas our construction applied to $U$ seen as local multiplier representation of $\Gamma=\uDiffSone$ fixes the phase in Weiner's implementation. Thus, by our implementation of the M\"obius group sitting in $\Vir$, the representation $\tilde U_\rho$ of $\Vir$ becomes a positive-energy representation.
 \begin{itemize}
  \item If $\Gamma = \uDiffSone$, this gives the positivity of energy, where we take $\tpi_\rho$ as $\tilde U_\rho$.
  \item If $\Gamma = LG$, we see that $\tilde U_\rho$ satisfies the covariance relation with $\tpi_\rho$ in any interval $I$, since $\tpi_0$ is coviariant with respect to $U$. Thus it satisfies covariance in $\tilde LG$. Then the representation $(\tpi_\rho, \tilde U_\rho)$ of $\tilde LG \rtimes \Vir$
  (restricted to $\tilde LG \rtimes \bbR$) has positive energy.
 \end{itemize}

 Assume that $\rho$ is factorial. We denote by $\tau_t \in \Diff_+(S^1)$ the rotation by $t \in S^1$ and by $\tau_{\bar t}$ the lift to $\uDiffSone$ of the rotation by $\bar t \in \bbR$
 (we are first given $\bar t \in \bbR$ and $t$ is its quotient by $2\pi\bbZ$). 
 Recall that for $\bar t \in \bbR$ and $t = \bar t /2\pi\bbZ$, it holds that
 $\cAd U_\rho(\tau_{\bar t}) (\rho_{I}(x)) = \rho_{\tau_t I}(\cAd U(\tau_t)(x))$ for any $x \in \cA(I), I \in \cI$.
 If $\bar t$ is a lift of the $2\pi$-rotation, then it holds that $\cAd U_\rho(\tau_{\bar t})(\rho_{I}(x)) = \rho_{I}(\cAd U(\tau_t)(x)) = \rho_{I}(x)$.
 This implies that $U_\rho(\tau_{\bar t}) \in \left(\bigcup_{I \in \cI} \rho_I(\cA(I))\right)'$.
 As one can write $\tau_{\bar t}$ as a product of diffeomorphisms localized in intervals with disjoint closures, one has 
 $U_\rho(\tau_{\bar t}) \in \left(\bigcup_{I \in \cI} \rho_I(\cA(I))\right)''$ too.
 As $\rho$ is factorial, this implies that $U_\rho(\tau_{\bar t}) \in \bbT\bb1$ and we can see $U_\rho$ 
 as a projective representation of $\Diff_+(S^1)$.
\end{proof}

Zellner showed that, assuming the (unpublished) results of \cite{Gloeckner16Measurable}
(see \cite[Reference 6]{Zellner16}),
any positive-energy representation of $\tilde LG$ is smooth \cite[Theorem 2.16]{Zellner16}.
Let $\Gamma = LG$, $\tpi_0 = \tpi_{G,k}$ the vacuum representation of $\tilde LG$ at level $k$. Let $\rho$ be one of the DHR representations of the loop group net.
Assuming the result of Zellner, we can show that the representation $\tpi_\rho$ of $\tilde LG$ constructed above has the same level $k$.
Indeed, as $\tpi_{\rho}$ has positive energy by Theorem \ref{th:ext} and $\tpi_\rho$ is smooth, it is the direct sum of irreducible representations with the same level, constructed and classified in \cite{PS86}.
The cocycle coincides with $c$ of the vacuum representation $\tpi_{G,k}$, therefore, it has level $k$ as well.
This result has also been proved by Carpi and Weiner by different methods, and is to appear in \cite{CW2X}.

In \cite{Henriques19LoopGroups}, for $\cA = \cA_{G,k}$, $\Gamma = LG$, and a DHR representation $\rho$ of $\cA_{G,k}$,
a corresponding level $k$ positive-energy representation of $\tilde LG$ has been constructed. Here we extended the result to include also chiral extensions of the loop group nets $\cA_{G,k}$, using the argument of local representations instead of colimits.

\subsection{Conformal covariance of DHR representations}\label{covariance}

Let $\cA$ be any conformal net as in Section \ref{sec:conformalnet}, $\Gamma = \uDiffSone$, and $\rho = \{\rho_I\}$ be a DHR representation of $\cA$ as in Section \ref{sec:DHR}. 
We say that $\rho$ is \textbf{diffeomorphism covariant} if there is a projective representation $W_\rho$ of $\DiffSone$ on $\cH_\rho$ (the representation space of $\rho$) such that
for any $I \in \cI$ and $x \in \cA(I)$ it holds that $\rho_{\gamma I}(\cAd U(\gamma)(x)) = \cAd W_\rho(\gamma)(\rho_I(x))$.

It turns out that diffeomorphism covariance is automatic for factorial $\rho$ (consequently, if $\rho$ is a direct sum of
irreducible representations). By Theorem \ref{th:ext}, we can choose $W_\rho$ to be
the projective representation of $\DiffSone$ on $\cH_\rho$ induced by $U_\rho$.
Indeed, let $x \in \cA(I)$ for some $I \in \cI$.
Let $\cU$ be a small neighbourhood of $\DiffSone
$ as in Lemma \ref{lm:fragmentation-diff}.
There is a smaller neighbourhood $\cV$ of $\cU$ such that any $\gamma \in \cV$
can be written as $\gamma = \gamma_1\gamma_2$, where $\gamma_j$ are supported in $I_j \in \cI$, $j=1,2$,
where $I_1 \cup I_2 = S^1$, $I \subset I_1$, $\gamma I \subset I_1$ and $\gamma_j \in \cU$ (cf.\ Lemma \ref{lm:fragmentation-diff}).
In this situation, $\overline{I_2} \cap \overline I = \emptyset$ and
we have (the cocycle appears with its conjugate thus it cancels)
\begin{align*}
 \rho_{\gamma I}(\cAd U(\gamma)(x))
 &= \rho_{I_1}(\cAd U(\gamma)(x)) \\
 &= \rho_{I_1}(U(\gamma)xU(\gamma)^*) = \rho_{I_1}(U(\gamma_1)U(\gamma_2)xU(\gamma_2)^*U(\gamma_1)^*) \\
 &= \rho_{I_1}(U(\gamma_1)xU(\gamma_1)^*) = U_\rho(\gamma_1)\rho_I(x) U_\rho(\gamma_1)^* \\
 &=  U_\rho(\gamma_1) U_\rho(\gamma_2)\rho_I(x) U_\rho(\gamma_2)^* U_\rho(\gamma_1)^* \\
 &= \cAd U_\rho(\gamma)(\rho_I(x)).
\end{align*}
This proves the desired covariance relation for $\rho$.

In \cite{Gui21Categorical}, Gui defines and proves ``conformal covariance'' of a DHR sector $\rho$ ($\pi_i$ in his notation)
as the existence of a representation
$\tilde W_\rho$ of a central extension $\cG_\cA$ of $\uDiffSone$ such that
$\tilde W_\rho(\gamma) = \rho_I(\tilde U(\gamma))$ for $\gamma$ in the group $\cG_\cA(I)$ (\cite[Theorem 2.2]{Gui21Categorical}).
This property follows also from our construction, and indeed we can take $\tilde W_\rho = \tilde U_\rho$,
as $\cG_\cA$ is a certain quotient group of the universal central extension $\Vir$.

\subsection{Naturality of covariance cocycles}

Let $\rho = \{\rho_I\},\sigma = \{\sigma_I\}$ be DHR representations of a conformal net $\cA$ as in Section \ref{sec:DHR}.
Without loss of generality, assume that they are ``localized'' respectively in $I_1, I_2\in\cI$, i.e.\ $\rho_{I_1'} = \rho_0$ and $\sigma_{I_2'} = \rho_0$, where $\rho_0$ is the vacuum representation of $\cA$ (in particular, the representation spaces $\cH_\rho$ and $\cH_\sigma$ are both $\cH_0$).
In particular, $\rho_{I_1}, \sigma_{I_2}$ are endomorphisms of $\cA(I_1), \cA(I_2)$ respectively, as in Section \ref{sec:DHR}.
Assume further that $I_1 \cup I_2 \subset I$ for some proper interval $I\in\cI$. Then, both $\rho_I$ and $\sigma_I$ are endomorphisms of $\cA(I)$ and every intertwining operator $V$ between the DHR representations $\rho$ and $\sigma$ (i.e.\ $V \in \cB(\cH_0)$ such that $V \rho_J(x) = \sigma_J(x) V$ for all $x \in \cA(J)$ and $J \in \cI$) belongs to $\cA(I)$. Indeed, $V x = V \rho_{I'}(x) = \sigma_{I'}(x) V = x V$ for every $x\in\cA(I')$, hence $V \in \cA(I')' = \cA(I)$ by Haag duality.

For a DHR endomorphism $\sigma$ and $\gamma \in \uDiffSone$, we define the \textbf{covariance cocycle} $z_{\sigma}(\gamma) := U(\gamma)U_\sigma(\gamma)^*$, where $U$, $U_\sigma$ are global multiplier representations of $\uDiffSone$ on $\cH_0$ from Theorem \ref{th:ext}.
By definition, $z_{\sigma}(\gamma)$ is a unitary intertwiner between $\sigma$ and $\sigma^\gamma := \cAd U(\gamma) \circ \sigma \circ \cAd U(\gamma^{-1})$.
Note that if $\sigma$ is localized in $I\in\cI$, then $\sigma^\gamma$ is localized in $\gamma I \in \cI$.

As a consequence of the fact that $U$ and $U_\sigma$ have the same cocycle $c$ by our construction, the unitaries $z_\sigma(\gamma)$ satisfy the \textbf{covariance cocycle identity}:
\begin{align*}
 z_\sigma(\gamma_1\gamma_2)
 &= U(\gamma_1\gamma_2)U_\sigma(\gamma_1\gamma_2)^* \\
 &= c(\gamma_1,\gamma_2)^{-1}U(\gamma_1)U(\gamma_2)c(\gamma_1,\gamma_2)U_\sigma(\gamma_2)^*U_\sigma(\gamma_1)^*\\
 &= U(\gamma_1)U(\gamma_2)U_\sigma(\gamma_2)^*U_\sigma(\gamma_1)^* \\
 &= U(\gamma_1)U(\gamma_2)U_\sigma(\gamma_2)^* U(\gamma_1)^* U(\gamma_1)U_\sigma(\gamma_1)^* \\
 &=\cAd U(\gamma_1)(z_\sigma(\gamma_2))\cdot z_\sigma(\gamma_1),
\end{align*}
where we used the equality $U_\sigma(\gamma_1\gamma_2)^*=c(\gamma_1,\gamma_2)U_\sigma(\gamma_2)^*U_\sigma(\gamma_1)^*$ and $c$ is the same cocycle of $U$. 
Another consequence of our result for $\Gamma = \uDiffSone$ is that the covariance cocycles satisfy the following \textbf{naturality property}:
$z_{\sigma}(\gamma) V = V^\gamma z_{\rho}(\gamma)$ for every $\gamma\in\uDiffSone$ 
and $V$ intertwiner between $\rho$ and $\sigma$, where $V^\gamma := \cAd U(\gamma)(V)$ is an intertwiner between $\rho^\gamma$ and $\sigma^\gamma$.
Indeed, if $\gamma \in \Diff_+(I_1)$, we have $U_{\sigma}(\gamma) = \sigma_{I_1}(U(\gamma))$ and
\begin{align*}
 z_{\sigma}(\gamma) V
 &= U(\gamma)U_{\sigma}(\gamma)^* V \\
 &= U(\gamma)\sigma_{I_1}(U(\gamma)^*) V \\
 &= U(\gamma) V \rho_{I_1}(U(\gamma)^*) \\
 &= \cAd U(\gamma)(V) \cdot U(\gamma) \rho_{I_1}(U(\gamma)^*) \\
 &= V^\gamma z_{\rho}(\gamma).
\end{align*}
If $\gamma \in \uDiffSone$ is $\gamma = \gamma_1 \gamma_2\cdots \gamma_n$ with $\gamma_j$ localized in some interval $I_j$, then $z_{\sigma}(\gamma_1 \gamma_2\cdots \gamma_n) V = V^{\gamma_1 \gamma_2\cdots \gamma_n} z_{\rho}(\gamma_1 \gamma_2\cdots \gamma_n)$ using the covariance cocycle identity.

This property of the covariance cocycles is a necessary ingredient to show that certain extensions of a conformal net are again diffeomorphism covariant, see e.g.\ \cite{KL04-1, DG18, MTW18, AGT23Pointed}. 

\subsubsection*{Acknowledgements}
We thank Sebastiano Carpi for informing us of \cite{Zellner16}, and
Marcel Bischoff, Yasuyuki Kawahigashi, Roberto Longo and Ryo Nojima for interesting discussions.
Moreover, we thank Milan Niestijl for informing us of the reference \cite{Glo07}.

M.S.A.\! is a Humboldt Research Fellow supported by the Alexander von Humboldt Foundation
and is partially supported by INdAM--GNAMPA Project ``Ricostruzione di segnali, tramite operatori e frame, in presenza di rumore'' CUP E53C23001670001, and by GNAMPA--INdAM.
L.G.\! and Y.T.\! acknowledge support from the GNAMPA--INdAM project \lq\lq Operator algebras and infinite quantum systems" CUP E53C23001670001,
project GNAMPA 2024 \lq\lq Probabilit\`a Quantistica e Applicazioni" CUP E53C23001670001,
from the MIUR Excellence Department Project MatMod@TOV awarded to the Department of Mathematics of the University of Rome Tor Vergata CUP E83C23000330006,
and from the University of Rome Tor Vergata funding OANGQS CUP E83C25000580005.

\def\polhk#1{\setbox0=\hbox{#1}{\ooalign{\hidewidth
  \lower1.5ex\hbox{`}\hidewidth\crcr\unhbox0}}} \def\cprime{$'$}

\end{document}